\documentclass[12pt]{amsart}
\title[Coherent state representations of a Lie group]{Coherent state representations \\of the holomorphic automorphism group \\of the tube domain \\over the dual of the Vinberg cone}
\author{Koichi Arashi}
\address{K. Arashi: Graduate School of Mathematics, Nagoya University, Chikusa-ku, Nagoya, 464-8602 Japan}
\email{m15005y@math.nagoya-u.ac.jp}
\usepackage{amssymb}
\usepackage{tikz} 
\usepackage{graphicx}
\usepackage{comment}
\numberwithin{equation}{section}
\theoremstyle{plain}
\newtheorem{theorem}{Theorem}[section]
\newtheorem{lemma}[theorem]{Lemma}
\newtheorem{proposition}[theorem]{Proposition}

\theoremstyle{definition}
\newtheorem{definition}[theorem]{Definition}

\theoremstyle{remark}
\newtheorem{remark}[theorem]{Remark}
\allowdisplaybreaks
\newcommand{\dt}{\left.\frac{d}{dt}\right|_{t=0}}

\newcommand{\one}{1}

\newcommand{\alpharbitgroup}{\alpha_0}
\newcommand{\matG}{G'}
\newcommand{\arbitline}{{L_0}}

\newcommand{\arbitgroup}{{G_0}}
\newcommand{\arbitalgebra}{{\mathfrak{g}_0}}
\newcommand{\arbitcenter}{Z_{\mathfrak{\mathfrak{g}}_0}}

\newcommand{\arbitmanifold}{{M_0}}

\newcommand{\ad}[1]{\mathop{\mathrm{ad}(#1)}}
\newcommand{\Int}[1]{\mathop{\mathrm{Int}}#1}
\newcommand{\Ad}[1]{\mathop{\mathrm{Ad}(#1)}}
\newcommand{\cAd}[1]{\mathop{\mathrm{Ad}^*(#1)}}

\begin{document}
\keywords{Coherent state representation; homogeneous bounded domain; momentum mapping; reproducing kernel; multiplier representation}
\maketitle
\begin{abstract}
We classify all irreducible coherent state representations of the holomorphic automorphism group of the tube domain over the dual of the Vinberg cone.

\end{abstract}
\section{Introduction}
Let $\arbitgroup$ be a connected Lie group, and let $(\pi,\mathcal{H})$ be a unitary representation of $\arbitgroup$.
We regard the projective space $\mathbb{P}(\mathcal{H})$ as a (possibly infinite-dimensional) K\"{a}hler manifold.
We call a $\arbitgroup$-orbit of $\mathbb{P}(\mathcal{H})$ a {\it coherent state orbit} (CS orbit for short) if it is a complex submanifold of $\mathbb{P}(\mathcal{H})$, and we call $\pi$ a {\it coherent state representation} (CS representation for short) if there exists a CS orbit in $\mathbb{P}(\mathcal{H})$ that does not reduce to a point (see \cite[Definition 4.2]{Lisiecki95}).
In this case, we say that $\pi$ is {\it generic} if $\pi$ is irreducible and $\ker \pi$ is discrete.
By Lisiecki \cite{Lisiecki90}, the generic CS representations coincide with the irreducible highest weight representations with discrete kernels for a semisimple Lie group.
Thus CS representations can be considered as generalizations of the highest weight representations of semisimple Lie groups to a wider class of groups.
Also the generic CS representations of connected unimodular Lie groups were studied and classified by Lisiecki \cite{Lisiecki91}.
After this remarkable advance, CS representations were also studied in the setting of Lie groups which have compactly embedded Cartan subalgebras by Neeb \cite{Neeb}.

The purpose of the present article is to give classifications of irreducible CS representations and generic CS representations for a Lie group which has not been considered.
Let $\Omega_5$ be the dual cone of the Vinberg cone, and let $\mathcal{D}_5$ be the tube domain over $\Omega_5$.
Let $G$ be the identity component of the holomorphic automorphism group of $\mathcal{D}_5$.

In Section \ref{Generaltho}, we review the theory of CS representations studied in \cite{Lisiecki90, Lisiecki91, Lisiecki95}.
In Section \ref{theholomor}, we review the explicit description of $G$ studied in \cite{Geatti87, IK20}.
In Section \ref{CSorbitand}, we show that every generic CS representation of $G$ is unitarily equivalent with a unitarization of a holomorphic multiplier representation of $G$ over $\mathcal{D}_5$ or the complex conjugate representation of it.
In Section \ref{Holomorphi}, we review the classification of the unitarizations of holomorphic multiplier representations of $G$ over $\mathcal{D}_5$ studied in \cite{Arashi20}.
In Section \ref{GenericCSr}, we classify all generic CS representations of $G$.
In Section \ref{Irreducibl}, we classify all irreducible non-generic CS representations of $G$.
In Section \ref{Intertwini}, we consider intertwining operators between the external tensor product of a one-dimensional unitary representation of $\mathbb{R}_{>0}$ and an irreducible highest weight representation of $SL(2,\mathbb{R})\times SL(2,\mathbb{R})$ and the unitarizations of holomorphic multiplier representations of $G$ over $\mathcal{D}_5$.

The author would like to thank Professor H. Ishi for a lot of helpful advice on this paper.

\section{General theory of CS representations}\label{Generaltho}

Throughout this paper, for a Lie group, we denote its Lie algebra by the corresponding Fraktur small letter.

Let $\arbitgroup$ be a connected Lie group. For a $\arbitgroup$-equivariant holomorphic line bundle $\arbitline$ over a complex manifold $\arbitmanifold$, let us denote the natural representation of $\arbitgroup$ on the space $\Gamma^{hol}(\arbitmanifold,\arbitline)$ of holomorphic sections of $\arbitline$ by $\tau_{\arbitline}$.
We introduce a notion of unitarizability for $\tau_\arbitline$.
\begin{definition}\label{wesaythatr}
We say that the representation $\tau_{\arbitline}$ of $\arbitgroup$ is {\it unitarizable} if there exists a nonzero Hilbert space $\mathcal{H}\subset\Gamma^{hol}(\arbitmanifold,\arbitline)$ satisfying the following conditions:
\begin{enumerate}
\item[(i)]the inclusion map $\iota:\mathcal{H}\hookrightarrow \Gamma^{hol}(\arbitmanifold,\arbitline)$ is continuous with respect to the open compact topology of $\Gamma^{hol}(\arbitmanifold,\arbitline)$,
\item[(ii)] $\tau_{\arbitline}(g)\mathcal{H}\subset \mathcal{H}\quad(g\in \arbitgroup)$ and $\|\tau_{\arbitline}(g)s\|_\mathcal{H}=\|s\|_\mathcal{H}\quad(g\in \arbitgroup, s\in\mathcal{H})$.
\end{enumerate}
In this case, we call the subrepresentation $(\tau_{\arbitline},\mathcal{H})$ a {\it unitarization} of the representation $(\tau_{\arbitline},\Gamma^{hol}(\arbitmanifold,\arbitline))$ of $\arbitgroup$. 
\end{definition}
A Hilbert space $\mathcal{H}$ satisfying the condition (i) is a reproducing kernel Hilbert space.
We note that a Hilbert space giving a unitarization of $\tau_\arbitline$ is unique if it exists, and any unitarization is irreducible (see \cite{Ishi11, Kobayashi68, Kunze62}).
Thus we write $\pi_{\arbitline}$ instead of $(\tau_{\arbitline},\mathcal{H})$.
Let $(\pi,\mathcal{H})$ be a CS representation of $\arbitgroup$, and let $L$ be the natural holomorphic line bundle over $\mathbb{P}(\mathcal{H})$ such that the fiber over $[v]=\mathbb{C}v\in\mathbb{P}(\mathcal{H})$ is given by the dual space $[v]^*$. Then we can identify the dual space $\mathcal{H}^*$ with $\Gamma^{hol}(\mathbb{P}(\mathcal{H}),L)$.
By the following proposition, we can see that if $\pi$ is irreducible, then $\overline{\pi}$ is equivalent with $\pi_{\arbitline}$ for a $\arbitgroup$-equivariant holomorphic line bundle $\arbitline$ over a CS orbit in $\mathbb{P}(\mathcal{H})$, where $\overline{\pi}$ denotes the complex conjugate representation.
\begin{proposition}[{\cite[Proposition 2]{Lisiecki91}}]\label{Letpimathc1}
Suppose that $\pi$ is irreducible, and let $M\subset\mathbb{P}(\mathcal{H})$ be a CS orbit. Then the map $\mathcal{H}^*\rightarrow \Gamma^{hol}(M,L)$ given by the composition of the map $\mathcal{H}^*\rightarrow\Gamma^{hol}(\mathbb{P}(\mathcal{H}),L)$ and the restriction map $\Gamma^{hol}(\mathbb{P}(\mathcal{H}),L)\rightarrow\Gamma^{hol}(M,L)$ is injective.
\end{proposition}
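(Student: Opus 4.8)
The plan is to unwind the identification $\mathcal{H}^*\cong\Gamma^{hol}(\mathbb{P}(\mathcal{H}),L)$ and reduce the asserted injectivity to the irreducibility of $\pi$. First I would make the identification explicit: a functional $\phi\in\mathcal{H}^*$ corresponds to the holomorphic section $s_\phi$ whose value at $[v]\in\mathbb{P}(\mathcal{H})$ is the restriction $\phi|_{[v]}\in[v]^*=L_{[v]}$. Under this description, for a point $[v]\in M$ the value $s_\phi([v])$ vanishes in the fiber $[v]^*$ exactly when $\phi(v)=0$. Hence the image of $\phi$ under the composed map $\mathcal{H}^*\to\Gamma^{hol}(M,L)$ is the zero section if and only if $\phi$ vanishes on the cone $\widehat{M}=\{v\in\mathcal{H}\setminus\{0\}:[v]\in M\}$ lying over $M$.

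Next I would exploit that $M$ is a single $\arbitgroup$-orbit. Fixing a base point $[v_0]\in M$, the action of $\arbitgroup$ on $\mathbb{P}(\mathcal{H})$ induced by $\pi$ gives $M=\{[\pi(g)v_0]:g\in\arbitgroup\}$, so the linear span $V$ of $\widehat{M}$ coincides with the span of $\{\pi(g)v_0:g\in\arbitgroup\}$. This $V$ is a $\arbitgroup$-invariant subspace of $\mathcal{H}$ containing the nonzero vector $v_0$, and therefore its closure $\overline{V}$ is a nonzero closed $\arbitgroup$-invariant subspace. Since $\pi$ is irreducible, $\overline{V}=\mathcal{H}$. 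Note that this step handles uniformly the degenerate case in which $M$ reduces to a point, as that case forces $\mathcal{H}=\mathbb{C}v_0$ to be one-dimensional.

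Finally, if $s_\phi$ restricts to the zero section on $M$, then by the first step $\phi$ vanishes on $\widehat{M}$, hence on its span $V$, and then, by continuity of $\phi$, on $\overline{V}=\mathcal{H}$; thus $\phi=0$, which is the desired injectivity. The argument is at bottom a density statement, so I do not anticipate a genuine obstacle. The only point demanding care is the bookkeeping in the first step, namely verifying that the vanishing of a holomorphic section of the dual tautological bundle $L$ at $[v]$ is equivalent to the vanishing of $\phi$ on the line $\mathbb{C}v$; once this translation is in place, irreducibility of $\pi$ does all the remaining work.
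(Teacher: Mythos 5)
The paper offers no proof of this proposition---it is imported verbatim from Lisiecki \cite[Proposition 2]{Lisiecki91}---so there is no internal argument to compare yours against. Your proof is correct and is the standard one: under the identification of $\phi\in\mathcal{H}^*$ with the section $[v]\mapsto\phi|_{[v]}\in[v]^*$ of $L$, vanishing of the restricted section on $M$ is exactly vanishing of $\phi$ on the cone $\widehat{M}$ over the orbit, whose closed linear span is a nonzero closed invariant subspace and hence all of $\mathcal{H}$ by irreducibility. Two small points worth noting: continuity of $\phi$ is what lets you pass from the span to its closure, and is guaranteed because $\mathcal{H}^*$ here is the topological dual; and your argument never uses that $M$ is a \emph{complex} submanifold, so the injectivity in fact holds for an arbitrary orbit --- the CS hypothesis only matters for the later use of this proposition, where one wants the target $\Gamma^{hol}(M,L)$ to consist of holomorphic sections over a complex manifold.
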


Let $M$ be a CS orbit, let $\alpharbitgroup:\arbitgroup\times M\rightarrow M$ be the action of $\arbitgroup$ on $M$, and let $\arbitcenter$ be the center of $\arbitalgebra$.
When $\pi$ is generic, it holds that 
\begin{equation}\label{mathrmLiek}
\mathrm{Lie}(\ker\alpharbitgroup)=\arbitcenter, \end{equation}
where $\ker\alpharbitgroup=\{g\in\arbitgroup;\alpharbitgroup(g,x)=x\text{ for all }x\in M\}$.

Next let us see the relationship between CS orbits and coadjoint orbits.
Let $\mu_\pi:\mathbb{P}(\mathcal{H}^\infty)\rightarrow\arbitalgebra^*$ be a moment map defined by
\begin{equation*}
\langle x, \mu_\pi([v])\rangle=-i\frac{(d\pi(x)v,v)_\mathcal{H}}{(v,v)_\mathcal{H}}\quad(v\in\mathcal{H}^\infty\backslash\{0\}, x\in\arbitalgebra). 
\end{equation*}
Then the image of $M$ under $\mu_\pi$ coincides with a coadjoint orbit.
We note that $M$ has the natural structure of a K\"{a}hler manifold which is induced by the Fubini-Stdy metric on $\mathbb{P}(\mathcal{H})$.
As a consequence of this property, we have the following theorem.
\begin{theorem}[{\cite[Theorem 2.17]{RV85}}]
The isotropy subgroup of $\arbitgroup$ at any point of $\mu_\pi(M)$ is connected. In particular, the coadjoint orbit $\mu_\pi(M)$ is simply connected, and $\mu_\pi$ defines a diffeomorphism of $M$ onto the coadjoint orbit.
\end{theorem}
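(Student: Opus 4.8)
The plan is to combine the $\arbitgroup$-equivariance of $\mu_\pi$ with the fact that, as a complex submanifold of $\mathbb{P}(\mathcal{H})$, $M$ carries the \emph{nondegenerate} (Kähler) restriction of the Fubini--Study form. Fix $p\in M$, set $\lambda=\mu_\pi(p)$, and write $H=\{g\in\arbitgroup: g\cdot p=p\}$ and $H'=\{g\in\arbitgroup:\cAd{g}\lambda=\lambda\}$ for the two isotropy subgroups, with Lie algebras $\mathfrak h,\mathfrak h'$. Directly from the defining formula, $\mu_\pi$ intertwines the $\arbitgroup$-action on $\mathbb{P}(\mathcal{H}^\infty)$ with the coadjoint action; hence $\mathcal O:=\mu_\pi(M)=\arbitgroup\cdot\lambda$ is a single coadjoint orbit, $H\subseteq H'$, and $\mu_\pi|_M$ is the natural equivariant projection $\arbitgroup/H\to\arbitgroup/H'$.

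The geometric core is to show that $\mu_\pi|_M$ is an immersion. For $x\in\arbitalgebra$ let $x_M$ denote the fundamental vector field of the action and let $\omega$ be the Fubini--Study form, so that the moment condition reads $d\langle x,\mu_\pi\rangle=\iota_{x_M}\omega$. Since $M=\arbitgroup\cdot p$ is an orbit, $T_pM=\{x_M(p):x\in\arbitalgebra\}$, and for $x,y\in\arbitalgebra$ one computes $\langle x,d\mu_\pi(y_M(p))\rangle=\omega_p(x_M(p),y_M(p))$. Thus $y_M(p)\in\ker d(\mu_\pi|_M)_p$ exactly when $y_M(p)$ is $\omega_p$-orthogonal to all of $T_pM$; because $M$ is a complex submanifold, $\omega|_M$ is a Kähler form and hence nondegenerate, so this radical is zero and $\mu_\pi|_M$ is an immersion. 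Being in addition the natural projection $\arbitgroup/H\to\arbitgroup/H'$ it is a submersion, so $\dim M=\dim\mathcal O$, whence $\mathfrak h=\mathfrak h'$; therefore $\mu_\pi|_M$ is a local diffeomorphism and, as a fiber-bundle projection, a covering map with discrete fiber $H'/H$, and $H$ contains the identity component of $H'$.

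The decisive and hardest point is the first assertion, namely the connectedness of the coadjoint isotropy $H'$; this is exactly the step that must use the fine structure of $\arbitgroup$ rather than formal moment-map geometry. I would approach it through the $\arbitgroup$-invariant complex structure that $\mathcal O$ acquires (transported from $M$ by the local diffeomorphism and spread out by homogeneity): it yields a splitting $\arbitalgebra^{\mathbb C}=\mathfrak h'^{\mathbb C}\oplus\mathfrak n^{+}\oplus\mathfrak n^{-}$ in which $\mathfrak q:=\mathfrak h'^{\mathbb C}\oplus\mathfrak n^{-}$ is a complex subalgebra and $H'=\{g\in\arbitgroup:\Ad{g}\mathfrak q=\mathfrak q\}$, so that proving connectedness amounts to showing that this normalizer equals its identity component. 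Granting this, $H'=(H')^{0}\subseteq H\subseteq H'$ forces $H=H'$, so the covering $\mu_\pi|_M$ has trivial fiber and is a diffeomorphism of $M$ onto $\mathcal O$; and simple connectivity of $\mathcal O$ transfers, across this diffeomorphism, from the coherent state orbit $M$, which as a homogeneous Kähler manifold of the present type is simply connected.
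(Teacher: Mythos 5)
First, a point of reference: the paper does not prove this statement at all --- it is quoted verbatim from Rosenberg--Vergne \cite{RV85} --- so your attempt has to be judged against what a complete argument would require. The formal skeleton you set up is correct and standard: equivariance of $\mu_\pi$ makes $\mu_\pi(M)$ a single coadjoint orbit $\mathcal O$ with $H\subseteq H'$; the moment identity $\langle x,d\mu_\pi(v)\rangle=\omega_p(x_M(p),v)$ together with the nondegeneracy of $\omega|_M$ (the Fubini--Study form restricted to the complex submanifold $M$) shows $\mu_\pi|_M$ is an immersion, hence $\mathfrak h=\mathfrak h'$ and $\mu_\pi|_M\colon G_0/H\to G_0/H'$ is a covering with fiber $H'/H$ containing $(H')^0\subseteq H$. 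Up to that point there is nothing to object to.

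The genuine gap is that the theorem's actual content --- the connectedness of the coadjoint isotropy $H'$ --- is never established. You correctly single it out as the decisive step, sketch a strategy (identify $H'$ with the normalizer of the complex subalgebra $\mathfrak q$ determined by the invariant complex structure and show that normalizer is connected), and then write ``Granting this''; but that normalizer claim is not a formality, it is precisely where the analytic input of \cite{RV85} (the realization of $H'$ as the stabilizer of the highest-weight-type line $[\mathcal K^\xi_{iI_3}]$, i.e.\ the positivity coming from the unitary representation) must enter, and without it the conclusions $H=H'$, triviality of the covering, and the diffeomorphism all collapse. The second, related, gap is the simple connectivity clause: you propose to transfer it \emph{from} $M$ \emph{to} $\mathcal O$, asserting that $M$ is simply connected ``as a homogeneous K\"ahler manifold of the present type.'' That is unsupported (homogeneous K\"ahler manifolds need not be simply connected --- complex tori already fail) and, worse, it is circular in context: the simple connectivity of $M$ is exactly the output of this theorem that the paper uses in Sections \ref{CSorbitand} and \ref{Irreducibl} to derive contradictions with $\pi_1(G,e)=\mathbb Z^2$. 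The intended logic runs the other way: one proves $H'$ connected (indeed, connected with the right homotopy type relative to $G_0$), deduces that $\mathcal O=G_0/H'$ is simply connected, and only then transports this to $M$ along the diffeomorphism. As written, your argument proves neither of the two assertions that carry the theorem's content.
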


\section{The holomorphic automorphism group of the tube domain over the dual of the Vinberg cone}\label{theholomor}

Let
\begin{equation*}
V=\left\{\left[\begin{array}{ccc}x^\one&0&x^4\\0&x^2&x^5\\x^4&x^5&x^3\end{array}\right]\in M_3(\mathbb{R});x^1,\cdots ,x^5\in\mathbb{R}\right\},
\end{equation*}
and let $\Omega_5=V\cap \mathbb{P}(3,\mathbb{R})$, where $\mathbb{P}(3,\mathbb{R})$ denotes the homogeneous convex cone consists of all $3$-by-$3$ real positive-definite symmetric matrices. We consider the following Siegel domain $\mathcal{D}_5$ in $V_\mathbb{C}$:
\begin{equation*}
\mathcal{D}_5=\left\{z=\left[\begin{array}{ccc}z^\one&0&z^4\\0&z^2&z^5\\z^4&z^5&z^3\end{array}\right]\in V_\mathbb{C};\mathrm{Im}\,z\in \Omega_5\right\}.
\end{equation*}
Let $\mathrm{Aut}_{hol}(\mathcal{D}_5)$ be the holomorphic automorphism group of $\mathcal{D}_5$.
We note that $\mathcal{D}_5$ is holomorphically equivalent to a complex bounded domain, and $\mathrm{Aut}_{hol}(\mathcal{D}_5)$ has the unique structure of a Lie group compatible with the compact open topology.
Let $G$ be the identity component of $\mathrm{Aut}_{hol}(\mathcal{D}_5)$.
\begin{theorem}[\cite{Geatti87}, {\cite[Theorem 2.2]{IK20}}]\label{Thefollowi}
The following linear group is isomorphic to $G$:
\begin{equation*}
\left\{ \left[\begin{array}{cccccc}a_1&0&0&b_1&0&\mu_1'\\
0&a_2&0&0&b_2&\mu_2'\\
\lambda_1&\lambda_2&a_3&\mu_1&\mu_2&\kappa\\
c_1&0&0&d_1&0&-\lambda_1'\\
0&c_2&0&0&d_2&-\lambda_2'\\
0&0&0&0&0&a_3^{-1}
\end{array}\right]\in M_6(\mathbb{R}); \begin{array}{c}a_i,b_i,c_i,d_i,\lambda_i,\lambda_i',\mu_i,\mu_i',\kappa\in\mathbb{R},\\a_3\in\mathbb{R}_{>0},\quad a_id_i-b_ic_i=1,\\ \left[\lambda_i\,\,\mu_i\right]=a_3\left[\lambda_i'\,\,\mu_i'\right]\left[\begin{array}{cc}a_i&b_i\\c_i&d_i\end{array}\right]\\\qquad\qquad\qquad(i=1,2)\end{array}\right\}.
\end{equation*}
In more detail the linear group acts on $\mathcal{D}_5$ by linear fractional transformations, and the natural map from the linear group to $G$ gives rise to an isomorphism between the Lie groups.
\end{theorem}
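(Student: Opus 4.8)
The plan is to compute the Lie algebra $\mathfrak{g}$ of $\mathrm{Aut}_{hol}(\mathcal{D}_5)$ as a space of complete holomorphic vector fields, integrate it, and match the resulting group with the displayed linear group, as in \cite{Geatti87, IK20}. Since $\mathcal{D}_5 = V + i\Omega_5$ is a Siegel domain of tube type and is holomorphically equivalent to a bounded domain, $\mathrm{Aut}_{hol}(\mathcal{D}_5)$ is a Lie group whose algebra consists of the complete holomorphic vector fields on $\mathcal{D}_5$; by the theory of Kaup, Matsushima and Ochiai and of Nakajima these are polynomials of degree at most two, and with respect to the grading induced by the Euler vector field $E=\sum_j z^j\partial_{z^j}$ one obtains
\begin{equation*}
\mathfrak{g} = \mathfrak{g}_{-1}\oplus\mathfrak{g}_0\oplus\mathfrak{g}_1,
\end{equation*}
with $\mathfrak{g}_{-1}$ the constant fields, $\mathfrak{g}_0$ the linear fields, and $\mathfrak{g}_1$ the quadratic fields. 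First I would identify $\mathfrak{g}_{-1}$ with $V$ via $v\mapsto\sum_j v^j\partial_{z^j}$, generating the translations $z\mapsto z+a$ ($a\in V$); these $5$ directions account for the free entries $b_1,b_2,\kappa,\mu_1',\mu_2'$ of the upper right $3\times3$ block.

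Next I would determine $\mathfrak{g}_0$, the Lie algebra of the linear automorphism group $G(\Omega_5)=\{g\in GL(V):g\Omega_5=\Omega_5\}$. Here I would use that $\Omega_5$ is homogeneous: by Vinberg's theory it carries a split solvable triangular subgroup acting simply transitively, which one writes down as the lower-triangular maps $X\mapsto gXg^{t}$ preserving the vanishing of the $(1,2)$-entry that defines $V$. A short computation shows the connected isotropy at $e=I$ is trivial, so the connected linear automorphism group is exactly this $5$-dimensional triangular group, giving $\dim\mathfrak{g}_0=5$ and the parameters $a_1,a_2,a_3$ (diagonal scalings) together with $\lambda_1',\lambda_2'$ (strictly lower-triangular action).

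The substantive step is the determination of $\mathfrak{g}_1$. Because $\Omega_5$ is the dual of the Vinberg cone, it is homogeneous but not self-dual, so $\mathcal{D}_5$ is \emph{not} symmetric and $\mathfrak{g}_1$ is strictly smaller than $\mathfrak{g}_{-1}$: only the quadratic fields compatible with the two rank-one ``$SL(2,\mathbb{R})$-subtubes'' survive. I would write the candidate quadratic fields $z\mapsto Q(z,z)$ attached to $c\in V$ and test completeness directly against the explicit description of $\Omega_5$, expecting precisely a two-dimensional $\mathfrak{g}_1$ spanned by the generators producing the entries $c_1,c_2$. Verifying that exactly these fields are complete, and that no further quadratic field is, is the main obstacle, since completeness is precisely the condition that fails in the non-symmetric directions and cannot be read off from the general degree bound; this is where the specific geometry of the dual Vinberg cone enters.

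Finally, having assembled $\mathfrak{g}=\mathfrak{g}_{-1}\oplus\mathfrak{g}_0\oplus\mathfrak{g}_1$ (total dimension $5+5+2=12$) with its bracket relations, I would integrate to the group and exhibit its action on $\mathcal{D}_5$ as a generalized linear-fractional transformation $z\mapsto (Az+B)(Cz+D)^{-1}$, where $A,B,C,D$ are the $3\times3$ blocks of the displayed $6\times6$ matrix. Checking that this action is well defined on $\mathcal{D}_5$, preserves the defining pattern of $V$, and is faithful then identifies the linear group with $G$. In this picture the constraints $a_id_i-b_ic_i=1$ and $[\lambda_i\ \mu_i]=a_3[\lambda_i'\ \mu_i']\left[\begin{smallmatrix}a_i&b_i\\c_i&d_i\end{smallmatrix}\right]$ are exactly the integrability conditions singling out the integrated group inside $GL(6,\mathbb{R})$, so confirming them closes the argument.
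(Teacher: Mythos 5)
First, a point of comparison: the paper does not prove this statement at all --- Theorem \ref{Thefollowi} is imported verbatim from \cite{Geatti87} and \cite[Theorem 2.2]{IK20}, so there is no internal proof to measure you against. Your strategy (compute the graded Lie algebra $\mathfrak{g}=\mathfrak{g}_{-1}\oplus\mathfrak{g}_0\oplus\mathfrak{g}_1$ of complete polynomial holomorphic vector fields on the tube domain via Kaup--Matsushima--Ochiai/Nakajima, then integrate and match with the $6\times 6$ matrix realization) is exactly the method of the cited sources, and your dimension count $5+5+2=12$ is consistent with the basis $\{E_1,\dots,W_2\}$ and with the decomposition $G=B\cdot G_{iI_3}$ used later in the paper. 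The identification of $\mathfrak{g}_0$ with the $5$-dimensional triangular group (trivial connected linear isotropy, reflecting the rigidity of the dual Vinberg cone) is also correct.

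That said, what you have written is a plan rather than a proof, and the gap sits precisely where you locate it: the determination of $\mathfrak{g}_1$. You assert that only two quadratic fields (those generating the $c_1,c_2$ directions, i.e.\ completing the two rank-one $\mathfrak{sl}(2,\mathbb{R})$-subtubes) are complete, but you do not exhibit the candidate quadratic fields, nor the completeness criterion that kills the remaining three directions of $\mathfrak{g}_{-1}^{*}$. Since the whole content of the theorem --- the difference between $G$ being $12$-dimensional rather than the $15$ dimensions a symmetric tube domain of this size would give --- lives in this step, the argument is not closed until it is carried out (e.g.\ via Nakajima's or Dorfmeister's characterization of $\mathfrak{g}_1$ in terms of the cone's clan structure, or by direct verification that the flow of the excluded quadratic fields exits $\mathcal{D}_5$ in finite time). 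Two smaller items are also left implicit: the verification that the constraints $a_id_i-b_ic_i=1$ and $[\lambda_i\ \mu_i]=a_3[\lambda_i'\ \mu_i']\left[\begin{smallmatrix}a_i&b_i\\c_i&d_i\end{smallmatrix}\right]$ really cut out a group closed under multiplication that the integrated one-parameter subgroups generate, and the faithfulness plus surjectivity onto the identity component needed to upgrade a local isomorphism to the stated isomorphism of Lie groups. None of these is conceptually problematic, but as written the proposal records where the work is rather than doing it.
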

Let us denote by $\matG$ the linear group given in Theorem \ref{Thefollowi}.
Let $E_1,E_2,E_3,E_{3,1},E_{3,2},A_1,A_2,A_3,A_{3,1},\allowbreak A_{3,2},W_1		$, and $W_2$ be the elements of $M_6(\mathbb{R})$ satisfying
\begin{equation*}\begin{split}
&e_1E_1+e_2E_2+e_3E_3+e_{3,1}E_{3,1}+e_{3,2}E_{3,2}\\&\quad+a_1A_1+a_2A_2+a_3A_3+a_{3,1}A_{3,1}+a_{3,2}A_{3,2}+k_1W_1+k_2W_2\\&=\left[\begin{array}{cccccc}\frac{a_1}{2}&0&0&e_1-k_1&0&e_{3,1}\\
0&\frac{a_2}{2}&0&0&e_2-k_2&e_{3,2}\\
a_{3,1}&a_{3,2}&\frac{a_3}{2}&e_{3,1}&e_{3,2}&e_3\\
k_1&0&0&-\frac{a_1}{2}&0&-a_{3,1}\\
0&k_2&0&0&-\frac{a_2}{2}&-a_{3,2}\\
0&0&0&0&0&-\frac{a_3}{2}
\end{array}\right]
\end{split}\end{equation*}
for $e_1, e_2, e_3, e_{3,1}, e_{3,2}, a_1, a_2, a_3, a_{3,1}, a_{3,2}, k_1, k_2\in\mathbb{R}$.
Then $\{E_1,E_2,E_3,E_{3,1},\allowbreak E_{3,2},A_1,A_2,A_3,A_{3,1},A_{3,2},W_1,W_2\}$ form a basis of $\mathfrak{g}'$, and we use the same symbols $E_1,E_2,E_3,\allowbreak E_{3,1},E_{3,2},A_1,A_2,A_3,A_{3,1},\allowbreak A_{3,2},W_1$, and $W_2$ for the corresponding elements of $\mathfrak{g}$.
Let $G_{iI_3}$ be the isotropy subgroup of $G$ at $iI_3\in\mathcal{D}_5$.
\begin{theorem}[\cite{Geatti87}]\label{Theidentit}
We have $G_{iI_3}=\exp \langle W_1, W_2\rangle$.
\end{theorem}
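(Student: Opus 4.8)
The plan is to prove the two inclusions $\exp\langle W_1,W_2\rangle\subseteq G_{iI_3}$ and $G_{iI_3}\subseteq\exp\langle W_1,W_2\rangle$ separately: the first by an explicit computation with the linear fractional action of Theorem \ref{Thefollowi}, and the second by a dimension count combined with a connectedness argument for the isotropy group.

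First I would establish $\exp\langle W_1,W_2\rangle\subseteq G_{iI_3}$. Writing each element of $\matG$ in $3$-by-$3$ block form $\left[\begin{smallmatrix}P&Q\\R&S\end{smallmatrix}\right]$, the realization of Theorem \ref{Thefollowi} presents the action on $\mathcal{D}_5$ in the standard linear fractional form $z\mapsto(Pz+Q)(Rz+S)^{-1}$. Since $W_1$ and $W_2$ have nonzero entries only in the disjoint index sets $\{1,4\}$ and $\{2,5\}$, they commute, and $\exp(tW_j)$ is the identity outside the corresponding block, where it equals the rotation $\begin{pmatrix}\cos t&-\sin t\\\sin t&\cos t\end{pmatrix}$. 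Substituting $z=iI_3$ into the linear fractional formula, the only affected diagonal entry transforms by $\frac{i\cos t-\sin t}{\cos t+i\sin t}=\frac{i(\cos t+i\sin t)}{\cos t+i\sin t}=i$, so $\exp(tW_j)\cdot iI_3=iI_3$ for $j=1,2$ and all $t\in\mathbb{R}$. Because $W_1$ and $W_2$ commute, $\exp\langle W_1,W_2\rangle=\{\exp(sW_1)\exp(tW_2);s,t\in\mathbb{R}\}$ fixes $iI_3$, which gives the inclusion; note that this subgroup is a connected $2$-dimensional torus.

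For the reverse inclusion I would argue by dimension. The domain $\mathcal{D}_5$ is a homogeneous tube domain over the homogeneous cone $\Omega_5$, so $G$ acts transitively and the orbit of $iI_3$ is all of $\mathcal{D}_5$; hence $\dim G_{iI_3}=\dim G-\dim_{\mathbb{R}}\mathcal{D}_5=12-10=2$. Thus the isotropy subalgebra $\mathfrak{g}_{iI_3}$ is $2$-dimensional, and as it contains the linearly independent elements $W_1,W_2$ by the previous step, it must equal $\langle W_1,W_2\rangle$. To promote this to an equality of groups I would verify that $G_{iI_3}$ is connected: since $\mathcal{D}_5$ is convex (being a tube over the convex cone $\Omega_5$), it is contractible, and since $G$ is connected, the homotopy exact sequence of the fibration $G_{iI_3}\hookrightarrow G\to\mathcal{D}_5$ yields $\pi_1(\mathcal{D}_5)\to\pi_0(G_{iI_3})\to\pi_0(G)$ with both outer terms trivial, so $\pi_0(G_{iI_3})=0$. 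Then $\exp\langle W_1,W_2\rangle$ and $G_{iI_3}$ are connected subgroups sharing the Lie algebra $\langle W_1,W_2\rangle=\mathfrak{g}_{iI_3}$, hence they coincide.

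The routine but indispensable part is the first step: the verification that the rotations $\exp(tW_j)$ genuinely fix $iI_3$ requires pinning down the block decomposition underlying Theorem \ref{Thefollowi} and confirming that the formula $z\mapsto(Pz+Q)(Rz+S)^{-1}$ is compatible with the pattern defining $V_{\mathbb{C}}$. The main conceptual obstacle is the connectedness of $G_{iI_3}$: although the homotopy argument is short, it rests on the global facts that $G$ acts transitively and that the quotient $\mathcal{D}_5$ is contractible, so I would make sure these properties of the homogeneous domain are recorded before drawing the conclusion.
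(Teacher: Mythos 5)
Your argument is correct, and it is worth noting that the paper itself offers no proof of this statement: Theorem \ref{Theidentit} is imported wholesale from \cite{Geatti87}, so your write-up is a self-contained substitute for the citation rather than a variant of an internal argument. Both inclusions check out. For the first, $W_1$ and $W_2$ do have support in the disjoint index pairs $\{1,4\}$ and $\{2,5\}$ (the $(1,4)$ entry of the displayed Lie algebra matrix is $e_1-k_1$ and the $(4,1)$ entry is $k_1$), so they commute, each $\exp(tW_j)$ is a plane rotation, all four $3\times 3$ blocks $P,Q,R,S$ are diagonal, and the computation $(P\,iI_3+Q)(R\,iI_3+S)^{-1}=iI_3$ goes through exactly as you say --- and is insensitive to the precise convention for the fractional action precisely because everything involved is diagonal. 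For the reverse inclusion, the global inputs you flag are all available: $\dim G=12$ from the explicit basis of $\mathfrak{g}'\simeq\mathfrak{g}$, $\dim_{\mathbb{R}}\mathcal{D}_5=10$, transitivity of $G$ because the affine automorphisms of the tube over the homogeneous cone $\Omega_5$ already act transitively and lie in the identity component, and contractibility of $\mathcal{D}_5=V+i\Omega_5$ by convexity, which via the homotopy sequence of the orbit fibration yields $\pi_0(G_{iI_3})=0$. The one point I would spell out is that identifying $\exp\langle W_1,W_2\rangle$ with the connected integral subgroup of $\langle W_1,W_2\rangle$ uses $[W_1,W_2]=0$ (so that the exponential image is already a subgroup, here a $2$-torus); with that noted, the two connected subgroups share the Lie algebra $\mathfrak{g}_{iI_3}$ and hence coincide. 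This is consistent with the paper's later use of $G_{iI_3}$ as a maximal compact subgroup with $\pi_1(G_{iI_3},e)=\mathbb{Z}^2$.
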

We have the following bracket relations:
\begin{equation*}\begin{split}
&[E_1,A_1]=-E_1,\\
&{[E_1,A_{3,1}]}=-E_{3,1},\\ 
&{[E_1,W_1]}=2A_1,\\
&{[E_2,A_2]}=-E_2,\\
&{[E_2,A_{3,2}]}=-E_{3,2},\\
&{[E_2,W_{2}]}=2A_2,\\
&{[E_3,A_3]}=-E_3,\end{split}\begin{split}
&\quad{[E_{3,1},A_1]}=-\tfrac{1}{2}E_{3,1},\\
&\quad{[E_{3,1},A_3]}=-\tfrac{1}{2}E_{3,1},\\
&\quad{[E_{3,1},A_{3,1}]}=-2E_3,\\
&\quad{[E_{3,1},W_1]}=A_{3,1},\\
&\quad{[E_{3,2},A_2]}=-\tfrac{1}{2}E_{3,2},\\
&\quad{[E_{3,2},A_3]}=-\tfrac{1}{2}E_{3,2},\\
&\quad{[E_{3,2},A_{3,2}]}=-2E_3,\\
&\quad{[E_{3,2},W_2]}=A_{3,2},\end{split}\begin{split}
&\quad{[A_1,A_{3,1}]}=-\tfrac{1}{2}A_{3,1},\\
&\quad{[A_1,W_1]}=-(W_1+2E_1),\\
&\quad{[A_2,A_{3,2}]}=-\tfrac{1}{2}A_{3,2},\\
&\quad{[A_2,W_2}]=-(W_2+2E_2),\\
&\quad{[A_3,A_{3,1}]}=\tfrac{1}{2}A_{3,1},\\
&\quad{[A_3,A_{3,2}]}=\tfrac{1}{2}A_{3,2},\\
&\quad{[A_{3,1},W_1]}=-E_{3,1},\\
&\quad{[A_{3,2},W_2]}=-E_{3,2}.
\end{split}\end{equation*}

\section{CS orbits of generic CS representations}\label{CSorbitand}

Let $M$ be a CS orbit of a generic CS representation $\pi$ of $G$, and let $K$ be the isotropy subgroup of $G$ at some point $m_0$ of $M$.
For a connected Riemannian manifold, every isotropy subgroup of the isometry group is compact.
Thus $\exp\mathrm{ad}_\mathfrak{g}\,\mathfrak{k}\subset\mathop{\mathrm{Int}}{\mathfrak{g}}$ is a compact subgroup, where for a Lie algebra $\arbitalgebra$, we denote by $\mathop{\mathrm{Int}}\arbitalgebra$ the subgroup $\exp \mathop{\mathrm{ad}}\arbitalgebra \subset GL(\arbitalgebra)$. 
It is known \cite{GPV68} that $G$ has trivial center and that $G_{iI_3}=\exp \langle W_1,W_2\rangle$ is a maximal compact subgroup of $G$.
Thus $\mathop{\mathrm{Int}}\mathfrak{g}$ is isomorphic to $G$. 
Moreover, any two maximal compact subgroups of $G$ are conjugate (see {\cite[Chapter 4, Theorem 3.5]{encyclopedia}}), so that we may and do assume that $\mathfrak{k}\subset\langle W_1,W_2\rangle$.
We then have $\mathfrak{k}=0$ or $\langle W_1,W_2\rangle$ because $M$ is an even-dimensional differentiable manifold.

We shall show that $\mathfrak{k}$ must equal $\langle W_1,W_2\rangle$. Arguing contradiction, assume that $\mathfrak{k}=0$.
Then $M$ is diffeomorphic to $G$.
We have the following theorem.
\begin{theorem}[{\cite[Chapter 4, Proposition 4.4 and Theorem 4.7]{encyclopedia}}]\label{Letarbitgr}
\begin{itemize}
\item[(a)] Let $G_0$ be a linear Lie group.
If $G_0$ equals $K_0D_0$ for some compact subgroup $K_0$ of $\arbitgroup$ and for some connected real split solvable Lie subgroup $D_0$ of $\arbitgroup$, then $K_0$ is a maximal compact subgroup of $\arbitgroup$.
\item[(b)]Let $\arbitgroup$ be a real algebraic linear group. Then the identity component of $\arbitgroup$ can be topologically decomposed into the direct product of the groups $K_0$ and $D_0$, where $K_0$ is a maximal compact subgroup of $G_0$ and $D_0$ a maximal real split solvable Lie subgroup of $\arbitgroup$. 
\end{itemize}
\end{theorem}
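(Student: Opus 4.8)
The plan is to deduce both parts from three standard structural inputs, glued together by the elementary fact that a connected real split solvable Lie group is diffeomorphic to a Euclidean space and therefore contains no nontrivial compact subgroup. The three inputs are: the Cartan--Iwasawa--Mal'cev theorem (existence, conjugacy, and maximality of maximal compact subgroups, together with the diffeomorphism $G_0\cong K_{\max}\times\mathbb{R}^n$); the Levi--Mostow decomposition of a real algebraic group; and the Iwasawa decomposition of a reductive group.

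For part (b) I would first apply the Levi--Mostow decomposition to the identity component $G_0^{\circ}$, writing $G_0^{\circ}=L\ltimes U$, where $U$ is the unipotent radical and $L$ a reductive subgroup; here $U$ is closed, simply connected, and real split solvable. I would then take an Iwasawa decomposition $L=K_0AN$ with $K_0$ maximal compact in $L$, with $A$ an $\mathbb{R}$-split torus, and with $N$ unipotent. Since $U$ is normal, $L$ normalizes $U$ and $A$ normalizes $N$, so that $D_0:=ANU$ is a connected solvable subgroup on whose Lie algebra every operator $\ad{X}$ has real eigenvalues; thus $D_0$ is real split solvable. Because $U$ carries no compact subgroup, $K_0$ remains maximal compact in $G_0^{\circ}$, and its maximality is exactly Cartan--Iwasawa--Mal'cev. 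The decisive claim is then that the product map $K_0\times D_0\to G_0^{\circ}$ is a diffeomorphism: injectivity follows from $K_0\cap D_0=\{e\}$, and surjectivity together with the homeomorphism property comes from the global Cartan decomposition $G_0^{\circ}=K_0\exp(\mathfrak{p})$ furnished by algebraicity. Maximality of $D_0$ among connected real split solvable subgroups then follows by a dimension count, once one knows such subgroups are mutually conjugate.

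For part (a) I would embed the given compact subgroup $K_0$ into a maximal compact subgroup $K_{\max}$ and invoke part (b) to obtain a decomposition $G_0=K_{\max}D_0'$ with $D_0'$ maximal real split solvable, so that $\dim G_0=\dim K_{\max}+\dim D_0'$. The hypothesis $G_0=K_0D_0$ yields $\dim G_0\le\dim K_0+\dim D_0$, and the maximality (up to conjugacy) of $D_0'$ yields $\dim D_0\le\dim D_0'$. Chaining these inequalities forces $\dim K_0=\dim K_{\max}$; since $K_0\subseteq K_{\max}$, their identity components coincide, which is precisely the maximality asserted.

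The hard part is the global diffeomorphism claim in (b): one must ensure that the factorization $G_0^{\circ}=K_0D_0$ is not merely a set-theoretic product but a genuine splitting of the manifold. This is exactly where real algebraicity, rather than the status of an arbitrary linear Lie group, is essential, since it guarantees a Cartan involution whose associated submanifold $\exp(\mathfrak{p})$ meets each coset of $K_0$ in a single point. The conjugacy of maximal real split solvable subgroups, which underwrites the dimension comparison in (a), rests on the same structure theory, so I expect the technical weight of the argument to concentrate on these two facts.
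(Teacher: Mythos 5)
The paper offers no proof of this statement: it is imported wholesale from Onishchik--Vinberg (the bracketed citation is the ``proof''), so there is nothing internal to compare against. Judged on its own terms, your sketch of part (b) --- Levi--Mostow decomposition $G_0^{\circ}=L\ltimes U$, Iwasawa decomposition $L=K_0AN$, $D_0:=ANU$, and the global diffeomorphism assembled from $L\cong K_0\times A\times N$ and $G_0^{\circ}\cong L\times U$ --- is the standard route and essentially what the cited source does. Two points are loose: surjectivity and the homeomorphism property come from chaining these three diffeomorphisms, not from ``the global Cartan decomposition $G_0^{\circ}=K_0\exp(\mathfrak{p})$,'' which is a statement about reductive groups and does not literally apply when $U\neq\{e\}$; and ``$U$ carries no compact subgroup, hence $K_0$ remains maximal compact in $G_0^{\circ}$'' needs the extra observation that a compact subgroup $C\supseteq K_0$ of $L\ltimes U$ meets $U$ trivially and therefore injects into $L=G_0^{\circ}/U$, where its image is a compact subgroup containing the maximal compact $K_0$.

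The genuine gap is in part (a). There $G_0$ is only assumed to be a linear Lie group, not real algebraic, so you cannot invoke part (b) to produce $G_0=K_{\max}D_0'$; and the conjugacy of maximal connected real split solvable subgroups, on which your inequality $\dim D_0\le\dim D_0'$ rests, is a theorem of the same depth as the one being proved. Moreover the endgame ``$\dim K_0=\dim K_{\max}$, hence their identity components coincide'' does not give maximality: $K_0$ could still omit components of $K_{\max}$. All of this can be bypassed by an elementary argument using only the fact you already isolated, namely that $D_0$ has no nontrivial compact subgroup (it is closed in $G_0$, being conjugate into the simply connected group of upper triangular matrices with positive diagonal, and is diffeomorphic to a Euclidean space). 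Let $K_1\supseteq K_0$ be any compact subgroup of $G_0$. For $k\in K_1$ write $k=k_0d$ with $k_0\in K_0$ and $d\in D_0$; then $d=k_0^{-1}k\in K_1$, so $d\in D_0\cap K_1$, which is a compact subgroup of $D_0$ and hence trivial. Thus $K_1=K_0$ and $K_0$ is maximal compact. I recommend replacing your argument for (a) by this; it is valid in the stated generality and needs neither part (b) nor any conjugacy theorem.
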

Thus it follows from Theorem \ref{Letarbitgr}(b) that $G$ is homeomorphic to $\mathcal{D}_5\times G_{iI_3}$.
Hence $\pi_1(G,e)=\pi_1(G_{iI_3},e)=\mathbb{Z}^2$, which contradicts that $M$ is simply connected.
Therefore we conclude that $\mathfrak{k}=\langle W_1,W_2\rangle$.

Now we have a $G$-equivariant diffeomorphism $\mathcal{D}_5\rightarrow M$.
Let us consider the K\"{a}hler structure $(j,g)$ on $\mathcal{D}_5$ which is the pullback, by the diffeomorphism, of the K\"{a}hler structure on $M$.
Also we can regard $\mathcal{D}_5$ as a K\"{a}hler manifold by means of the Bergman metric on $\mathcal{D}_5$.
Then it follows from {\cite[Theorem 6.1]{Dotti93}} that there exists a biholomorphism $\mathcal{D}_5\rightarrow M$ since $G$ acts on $(\mathcal{D}_5,j,g)$ by holomorphic isometries.
Thus the action of $G$ on $M$ induces an action of $G$ on $\mathcal{D}_5$ by holomorphic automorphisms, and the action is given by $G\times\mathcal{D}_5\ni (g,z)\mapsto \varphi(g)z\in\mathcal{D}_5$ for some automorphism $\varphi$ of $G$.
Let $\psi$ be the automorphism of $\mathfrak{g}$ satisfying $\psi^2=\mathrm{id}_\mathfrak{g}$, $\psi(E_1)=E_2$, $\psi(E_3)=E_3$, $\psi(E_{3,1})=E_{3,2}$, $\psi(A_1)=A_2$, $\psi(A_3)=A_3$, $\psi(A_{3,1})=A_{3,2}$, and $\psi(W_1)=W_2$, and let $\sigma$ be the automorphism of $\mathfrak{g}$ satisfying $\sigma(E_1)=-E_1$, $\sigma(E_2)=-E_2$, $\sigma(E_3)=-E_3$, $\sigma(E_{3,1})=-E_{3,1}$, $\sigma(E_{3,2})=-E_{3,2}$, $\sigma(A_1)=A_1$, $\sigma(A_2)=A_2$, $\sigma(A_3)=A_3$, $\sigma(A_{3,1})=A_{3,1}$, $\sigma(A_{3,2})=A_{3,2}$, $\sigma(W_1)=-W_1$, and $\sigma(W_2)=-W_2$.
For an automorphism $\varphi$ of $\mathfrak{g}$, let $\varphi^0=\mathrm{id}_\mathfrak{g}$, and let $\varphi^1=\varphi$.
\begin{proposition}\label{Anyautomor}
Any automorphism $\varphi$ of $\mathfrak{g}$ can be written as $\varphi=\psi^{\varepsilon}\circ \sigma^{\varepsilon'} \circ\Ad{g}$ for some $g\in G$ and for some $\varepsilon, \varepsilon'\in\{0,1\}$.
\end{proposition}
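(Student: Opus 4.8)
The plan is to determine $\mathrm{Aut}(\mathfrak{g})$ modulo inner automorphisms by exploiting the Levi decomposition of $\mathfrak{g}$, and to show the outer class of any $\varphi$ is generated by those of $\psi$ and $\sigma$. First I would read the structure off the bracket relations. The spans $\mathfrak{s}_1=\langle E_1,A_1,W_1\rangle$ and $\mathfrak{s}_2=\langle E_2,A_2,W_2\rangle$ are commuting subalgebras, each isomorphic to $\mathfrak{sl}(2,\mathbb{R})$ (one checks that $\ad{W_i}$ has eigenvalues $0,\pm 2i$ on $\mathfrak{s}_i$), so $\mathfrak{s}:=\mathfrak{s}_1\oplus\mathfrak{s}_2$ is a Levi subalgebra. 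The radical is $\mathfrak{r}=\langle E_3,A_3,E_{3,1},A_{3,1},E_{3,2},A_{3,2}\rangle$, its nilradical $\mathfrak{n}=\langle E_3,E_{3,1},A_{3,1},E_{3,2},A_{3,2}\rangle$ is a five-dimensional Heisenberg algebra with one-dimensional center $\langle E_3\rangle$, and as an $\mathfrak{s}$-module $\mathfrak{r}$ decomposes as $\langle E_3,A_3\rangle$ (trivial) $\oplus\, V_1\oplus V_2$, where $V_i:=\langle E_{3,i},A_{3,i}\rangle$ is the standard $2$-dimensional representation of $\mathfrak{s}_i$ and is $\mathfrak{s}_j$-trivial for $j\neq i$.

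For an arbitrary automorphism $\varphi$, the ideals $\mathfrak{r}$, $\mathfrak{n}$, and the center $\langle E_3\rangle$ of $\mathfrak{n}$ are all characteristic, so $\varphi$ preserves each and $\varphi(E_3)=cE_3$ for some $c\in\mathbb{R}^\times$. Since $\varphi(\mathfrak{s})$ is again a Levi subalgebra, the Malcev--Harish-Chandra conjugacy theorem lets me compose $\varphi$ with an inner automorphism $\Ad{\exp n}$, $n\in\mathfrak{n}$, so that afterwards $\varphi(\mathfrak{s})=\mathfrak{s}$. An automorphism of $\mathfrak{s}=\mathfrak{sl}(2,\mathbb{R})\oplus\mathfrak{sl}(2,\mathbb{R})$ either preserves or interchanges the two simple ideals; composing with $\psi$ (which interchanges them) if necessary, I may assume $\varphi(\mathfrak{s}_i)=\mathfrak{s}_i$, so that $\varphi|_{\mathfrak{s}_i}$ is inner or outer for each $i$.

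The heart of the argument, and the step I expect to be the main obstacle, is to show that $\varphi|_{\mathfrak{s}_1}$ and $\varphi|_{\mathfrak{s}_2}$ must be of the same type (both inner or both outer). The brackets $[E_{3,i},A_{3,i}]=-2E_3$ exhibit the $\mathfrak{s}_i$-invariant symplectic form on $V_i$ as taking values in the common line $\langle E_3\rangle$. Applying $\varphi$ and using that an alternating form in two variables transforms by a determinant, I get $\det(\varphi|_{V_i})=c$ for both $i$. On the other hand $\varphi|_{V_i}$ intertwines the standard representation with its twist by $\varphi|_{\mathfrak{s}_i}$; writing $\varphi|_{\mathfrak{s}_i}=\Ad{g_0}$ with $g_0\in GL(2,\mathbb{R})$ and invoking Schur's lemma (the real representation $V_i$ is absolutely irreducible), $\varphi|_{V_i}$ is a scalar multiple of the action of $g_0$, whence $\det(\varphi|_{V_i})$ has the sign of $\det g_0$, positive exactly when $\varphi|_{\mathfrak{s}_i}$ is inner. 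Since both determinants equal the single scalar $c$, the two restrictions share their type. When they are both outer I compose with $\sigma$ (which restricts to the outer automorphism on each $\mathfrak{s}_i$, with $c=-1$), reducing to the case where $\varphi|_{\mathfrak{s}_i}$ is inner for each $i$; composing with an inner automorphism coming from the subgroups of $G$ with Lie algebras $\mathfrak{s}_1,\mathfrak{s}_2$, I then arrange $\varphi|_{\mathfrak{s}}=\mathrm{id}$.

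It remains to show that an automorphism $\varphi$ fixing $\mathfrak{s}$ pointwise is inner. Such a $\varphi$ commutes with the $\mathfrak{s}$-action and so preserves each isotypic component; Schur's lemma forces $\varphi|_{V_i}=\lambda_i\,\mathrm{id}$ with $\lambda_i\in\mathbb{R}^\times$, while $\varphi$ maps $\langle E_3,A_3\rangle$ into itself. The relations $[E_{3,i},A_{3,i}]=-2E_3$ give $\lambda_i^2=c$ and $[E_3,A_3]=-E_3$ pins down the $A_3$-component, leaving $\varphi(E_3)=cE_3$, $\varphi(A_3)=\alpha E_3+A_3$, $\varphi|_{V_i}=\lambda_i\,\mathrm{id}$ with $\lambda_1^2=\lambda_2^2=c>0$. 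Each such $\varphi$ is realized by an inner automorphism: $\Ad{\exp(sA_3)}$ scales $V_1,V_2$ by $e^{s/2}$ and $E_3$ by $e^s$; $\Ad{\exp(\pi W_i)}$ acts by $-1$ on $V_i$ and trivially elsewhere, adjusting the signs of $\lambda_1,\lambda_2$ independently; and $\Ad{\exp(tE_3)}$ produces the $\alpha E_3$ term in $\varphi(A_3)$. Hence $\varphi$ is inner. Tracking the compositions and using that the inner automorphisms form a normal subgroup of $\mathrm{Aut}(\mathfrak{g})$ (so they may be collected on the right), the original automorphism has the asserted form $\psi^\varepsilon\circ\sigma^{\varepsilon'}\circ\Ad{g}$.
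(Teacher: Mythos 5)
Your proof is correct, but it follows a genuinely different route from the paper's. The paper works from the bottom of the ideal lattice upward: using Lemma \ref{l} it notes $\varphi$ must preserve or swap the two Heisenberg ideals $\mathfrak{h}_3,\mathfrak{h}_3'$ (handled by $\psi$), then invokes the classical description of $\mathrm{Aut}$ of the $3$-dimensional Heisenberg algebra (inner $\cdot$ symplectic $\cdot$ dilation $\cdot$ inversion, with the symplectic maps and dilations realized by $\Ad{\exp\mathfrak{s}_3}$ and $\Ad{\exp\mathfrak{a}_1}$, and the inversion by $\sigma$) to normalize $\varphi|_{\mathfrak{h}_3}=\mathrm{id}$, and finally propagates the identity from $\mathfrak{h}_3$ to $\mathfrak{h}_5$ to $\mathfrak{h}_5\oplus\mathfrak{a}_1$ and then to all of $\mathfrak{g}$ using the faithfulness of $\mathrm{ad}$ restricted to the ideal $\mathfrak{h}_5\oplus\mathfrak{a}_1$ (Remark \ref{Forxe1E1e2}). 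You instead work from the top down via the Levi decomposition $\mathfrak{g}=\mathfrak{s}\ltimes\mathfrak{r}$, Malcev conjugacy of Levi subalgebras, and Schur's lemma on the isotypic decomposition of $\mathfrak{r}$. The real content of the proposition is that the outer classes on the two $\mathfrak{sl}(2,\mathbb{R})$ factors cannot be chosen independently (otherwise the outer group would have order $8$ rather than at most $4$); your determinant argument on the $\langle E_3\rangle$-valued symplectic forms $[E_{3,i},A_{3,i}]=-2E_3$, forcing $\det(\varphi|_{V_i})=c$ for both $i$, isolates this constraint cleanly, whereas in the paper it is absorbed implicitly into the step extending $\varphi|_{\mathfrak{h}_3}=\mathrm{id}$ (hence $\varphi(E_3)=E_3$) to $\varphi|_{\mathfrak{h}_5}=\mathrm{id}$ modulo inner automorphisms. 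The paper's route buys economy, since the ideal lattice and the kernel computation of Remark \ref{Forxe1E1e2} are needed elsewhere in Section \ref{Irreducibl}; yours is more structural and makes the order of $\mathrm{Out}(\mathfrak{g})$ conceptually transparent. Both are complete; the only points in your write-up worth making fully explicit are that $\varphi$ preserves each $V_i$ (because the twist of the standard $\mathfrak{sl}(2,\mathbb{R})$-module by any automorphism is again the standard module, so isotypic components are preserved) and that $\Ad{\exp\mathfrak{s}_i}$ exhausts the inner automorphisms of $\mathfrak{s}_i$, both of which are routine.
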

We postpone the proof to Section \ref{Irreducibl}.
The automorphisms $\psi$ and $\sigma$ lift to the automorphisms of $G$.
To simplify the notation, we use the same symbols $\psi$ and $\sigma$ for the lifts.
Then we see that $\psi$ induces a biholomorphism 
\begin{equation*}
\mathcal{D}_5\ni(z^1,z^2,z^3,z^4,z^5)\mapsto (z^2,z^1,z^3,z^5,z^4)\in \mathcal{D}_5
\end{equation*}
and $\sigma$ a biholomorphism
\begin{equation*}
\mathcal{D}_5\ni(z^1,z^2,z^3,z^4,z^5)\mapsto (-\overline{z^1},-\overline{z^2},-\overline{z^3},-\overline{z^4},-\overline{z^5})\in \overline{\mathcal{D}_5},
\end{equation*}
where $\overline{\mathcal{D}_5}$ denotes the conjugate manifold.
Thus by Proposition \ref{Letpimathc1}, $\pi$ or $\overline{\pi}$ is unitarily equivalent with $\pi_{\arbitline}$ for some $G$-equivariant holomorphic line bundle $\arbitline$ over $\mathcal{D}_5$.
Here for a subgroup $G_0\subset G$, by a $G_0$-equivariant bundle over $\mathcal{D}_5$, we mean a $G_0$-equivariant bundle over $\mathcal{D}_5$ such that the action of $G_0$ on the base space $\mathcal{D}_5$ is given by $G_0\times\mathcal{D}_5\ni (g,z)\mapsto gz\in\mathcal{D}_5$.
We note that $\mathcal{D}_5$ is a Stein manifold (see \cite{Chen}), and hence every holomorphic line bundle over $\mathcal{D}_5$ is trivial by the Oka-Grauert principle.
Hence the representation $\tau_{\arbitline}$ can be realized on the space $\mathcal{O}(\mathcal{D}_5)$ of holomorphic functions on $\mathcal{D}_5$.
We call such a representation of $G$ on $\mathcal{O}(\mathcal{D}_5)$ a {\it holomorphic multiplier representation of $G$ over $\mathcal{D}_5$}.
We get the following theorem.

\begin{theorem}\label{Letpimathc2}
Let $\pi$ be a generic CS representation of $G$. Then $\pi$ or $\overline{\pi}$ is unitarily equivalent with a unitarization of a holomorphic multiplier representation of $G$ over $\mathcal{D}_5$.
\end{theorem}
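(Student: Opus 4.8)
The plan is to assemble, into a single deduction, the structural analysis developed above and to reduce the statement to an application of Proposition~\ref{Letpimathc1}. Starting from a generic CS representation $\pi$ with CS orbit $M$, the first task is to pin down the isotropy subgroup $K$ of $G$ at a point $m_0\in M$. Since $M$ carries the Kähler metric induced from the Fubini--Study metric and $G$ acts by isometries, $K$ is compact; using that $G_{iI_3}=\exp\langle W_1,W_2\rangle$ (Theorem~\ref{Theidentit}) is maximal compact and that $\mathop{\mathrm{Int}}\mathfrak{g}\cong G$, I would conjugate so that $\mathfrak{k}\subset\langle W_1,W_2\rangle$, and then use that $\dim M$ is even to force $\mathfrak{k}=0$ or $\mathfrak{k}=\langle W_1,W_2\rangle$. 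Ruling out $\mathfrak{k}=0$ is carried out by the fundamental-group argument: if $\mathfrak{k}=0$ then $M\cong G$, and Theorem~\ref{Letarbitgr}(b) gives $G\simeq\mathcal{D}_5\times G_{iI_3}$ as topological spaces, whence $\pi_1(G)=\mathbb{Z}^2$, contradicting the simple connectivity of $M$ (a consequence of the moment-map description of $M$). Hence $\mathfrak{k}=\langle W_1,W_2\rangle=\mathfrak{g}_{iI_3}$.

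Once $K=G_{iI_3}$, there is a $G$-equivariant diffeomorphism $\mathcal{D}_5\cong G/G_{iI_3}\to M$. I would transport the Kähler structure $(j,g)$ of $M$ back to $\mathcal{D}_5$, so that $G$ acts on $(\mathcal{D}_5,j,g)$ by holomorphic isometries; by Dotti's theorem this structure is biholomorphic to the Bergman--Kähler one, and the induced $G$-action on $\mathcal{D}_5$ is realized through an automorphism $\varphi$ of $G$ via $(g,z)\mapsto\varphi(g)z$. The next step is to control $\varphi$ using Proposition~\ref{Anyautomor}: writing $\varphi=\psi^{\varepsilon}\circ\sigma^{\varepsilon'}\circ\mathrm{Ad}(h)$, I would absorb the inner part $\mathrm{Ad}(h)$ into the $G$-action and analyze the two discrete factors. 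The map $\psi$ is a genuine biholomorphism of $\mathcal{D}_5$, so it does not disturb holomorphy; the map $\sigma$, however, is antiholomorphic, sending $\mathcal{D}_5$ to the conjugate manifold $\overline{\mathcal{D}_5}$, and this is precisely the source of the dichotomy between $\pi$ and $\overline{\pi}$ in the statement.

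With the action identified, Proposition~\ref{Letpimathc1} shows that $\pi$ (when $\varepsilon'=0$) or $\overline{\pi}$ (when $\varepsilon'=1$) is unitarily equivalent to $\pi_{\arbitline}$ for a suitable $G$-equivariant holomorphic line bundle $\arbitline$ over $\mathcal{D}_5$. Finally, since $\mathcal{D}_5$ is Stein, the Oka--Grauert principle trivializes $\arbitline$, so $\tau_{\arbitline}$ can be realized on $\mathcal{O}(\mathcal{D}_5)$ as a holomorphic multiplier representation, which completes the argument.

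The main obstacle is not any single computation but the correct sequencing and justification of the two transfer steps. The genuinely hard inputs are Dotti's rigidity theorem---which is what upgrades a $G$-invariant compatible Kähler structure on $\mathcal{D}_5$ to an honest biholomorphism rather than merely a diffeomorphism---and the automorphism classification of Proposition~\ref{Anyautomor}, whose proof is deferred to Section~\ref{Irreducibl}. The subtlety I would watch most carefully is the bookkeeping around $\sigma$: one must check that in the case $\varepsilon'=1$ the object made holomorphic is genuinely $\overline{\pi}$, and that this conjugation is compatible with the identification $\mathcal{H}^{*}\cong\Gamma^{hol}(M,L)$ of Proposition~\ref{Letpimathc1}, so that the resulting line bundle lives over $\mathcal{D}_5$ and not over $\overline{\mathcal{D}_5}$.
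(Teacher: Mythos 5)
Your proposal follows the paper's own argument step for step: the compactness of the isotropy group, conjugation of $\mathfrak{k}$ into $\langle W_1,W_2\rangle$, the fundamental-group contradiction via Theorem \ref{Letarbitgr}(b) that rules out $\mathfrak{k}=0$, Dotti's rigidity theorem to upgrade the equivariant diffeomorphism to a biholomorphism, Proposition \ref{Anyautomor} with the $\psi$/$\sigma$ dichotomy producing $\pi$ versus $\overline{\pi}$, and finally Proposition \ref{Letpimathc1} together with the Oka--Grauert trivialization over the Stein manifold $\mathcal{D}_5$. This is essentially identical to the paper's proof, including the cautionary remarks about $\sigma$ being antiholomorphic.
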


\section{Holomorphic multiplier representations over $\mathcal{D}_5$}\label{Holomorphi}

Let $\mathfrak{g}_-$ be the complex subalgebra of $\mathfrak{g}_\mathbb{C}$ given by 
\begin{equation*}
\mathfrak{g}_-=\left\{x+iy\in\mathfrak{g}_\mathbb{C};\dt e^{tx}iI_3+i\dt e^{ty}iI_3\in T_{iI_3}^{0,1}\mathcal{D}_5\right\},
\end{equation*}
where $T_{iI_3}^{0,1}\mathcal{D}_5$ denotes the antiholomorphic tangent vector space at $iI_3$.
By Tirao and Wolf \cite{TW70}, the isomorphism classes of $G$-equivariant holomorphic line bundles over $\mathcal{D}_5$ stand in one-one correspondence with the one-dimensional complex representations of $\mathfrak{g}_-$ whose restrictions to $\mathfrak{g}_{iI_3}$ lift to representations of $G_{iI_3}$.
For a basis $\{x_\lambda\}$ of $\mathfrak{g}$, we shall denote the dual basis by $\{x_\lambda^*\}$. Let $\mathcal{M}$ be the set consists of all linear forms $\xi$ on $\mathfrak{g}$ given by 
\begin{equation*}\begin{split}
\xi=\xi(\xi_3, \eta_3, n, n')=\xi_3E_{3}^*+\eta_3A_{3}^*+\frac{n}{2}(2W_\one^*-E_{1}^*)+\frac{n'}{2}(2W_2^*-E_{2}^*),
\end{split}\end{equation*}
with $\xi_3, \eta_3\in\mathbb{R}$ and $n, n' \in\mathbb{Z}$.
If $\xi$ is extended to a complex linear form on $\mathfrak{g}_\mathbb{C}$, then $i\xi|_{\mathfrak{g}_-}\,(\xi\in\mathcal{M})$ defines a one-dimensional complex representation of $\mathfrak{g}_-$ whose restriction to $\mathfrak{g}_{iI_3}$ lifts to a representation of $G_{iI_3}$.
For $\xi\in\mathcal{M}$, let $\arbitline$ be a $G$-equivariant holomorphic line bundle over $\mathcal{D}_5$ whose isomorphism class corresponds to $i\xi|_{\mathfrak{g}_-}$, and put $\tau_\xi=\tau_{\arbitline}$. Also we put $\pi_\xi=\pi_{\arbitline}$ when $\tau_{\arbitline}$ is unitarizable.
Let
\begin{equation}\label{G1xixynnx0}\begin{split}
\Theta^G(n,n')=\{\xi(\xi_3, \eta_3, n, n'); \xi_3<0, \eta_3\in\mathbb{R}\}\quad(n, n'\in\mathbb{Z}_{>0}),\\
\Theta^G(\eta_3,n,n') =\{\xi(0,\eta_3,n,n')\}\quad(\eta_3\in\mathbb{R},n,n'\in\mathbb{Z}_{\geq 0}).
\end{split}\end{equation}
Then we have the following theorem.
\begin{theorem}[{\cite{Arashi20}}, {\cite[Theorem 13(i) and (iii)]{Ishi11}}]\label{beginitemi}
\begin{itemize}
 \item[(a)] For $\xi\in\mathcal{M}$, the representation $\tau_\xi$ is unitarizable if and only if $\xi$ belongs to any of the sets in \eqref{G1xixynnx0}.
 \item[(b)]
 For $\xi,\xi'\in\mathcal{M}$ with $\tau_\xi$, $\tau_{\xi'}$ unitarizable, the representations $\pi_\xi$ and $\pi_{\xi'}$ are unitarily equivalent if and only if $\xi$ and $\xi'$ belongs to the same set in \eqref{G1xixynnx0}.
\item[(c)]
Every holomorphic multiplier representation of $G$ over $\mathcal{D}_5$ is unitarily equivalent with $\pi_\xi$ for some $\xi\in\mathcal{M}$.
\end{itemize}
\end{theorem}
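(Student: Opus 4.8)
The plan is to realize each $\tau_\xi$ concretely and then reduce the three assertions to harmonic analysis on the cone $\Omega_5$. Since $\mathcal{D}_5$ is Stein, the Oka--Grauert principle (already invoked above) lets us trivialize the line bundle and realize $\tau_\xi$ on $\mathcal{O}(\mathcal{D}_5)$ with an explicit multiplier $j_\xi(g,z)$. A unitarization, if it exists, is a reproducing kernel Hilbert space $\mathcal{H}_\xi\subset\mathcal{O}(\mathcal{D}_5)$ on which $G$ acts isometrically; by Definition \ref{wesaythatr} and the uniqueness recorded after it, $\mathcal{H}_\xi$ is determined by its reproducing kernel $K_\xi(z,w)$, and the isometry condition forces $K_\xi$ to transform by $j_\xi$ under $G$. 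By Theorem \ref{Letarbitgr}(b) the maximal split solvable subgroup $S=D_0$ acts simply transitively on $\mathcal{D}_5$, so this quasi-invariance pins down $K_\xi$ up to a constant: it must be a product of powers of the fundamental relative invariants $\Delta_1,\Delta_2,\Delta_3$ of the rank-$3$ cone $\Omega_5$, evaluated at $(z-\bar w)/2i$ (which on the diagonal equals $\mathrm{Im}\,z\in\Omega_5$), with exponents read off from $\xi$. The integrality of $n,n'$ is exactly the requirement that the fiber character integrate over the torus $G_{iI_3}=\exp\langle W_1,W_2\rangle$ of Theorem \ref{Theidentit}.

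For (a), unitarizability is thus equivalent to $K_\xi$ being a positive-definite kernel. Via the partial Laplace transform along $S$, this is in turn equivalent to positivity of the associated Riesz (Gindikin) distribution on $\Omega_5$, whose meromorphic continuation is governed by a Gindikin gamma factor $\Gamma_{\Omega_5}$ built from ordinary $\Gamma$-functions attached to the structure of the cone. Tracking which exponents keep this factor finite and nonnegative yields exactly two regimes: an open region in which the weighted Bergman integral $\int_{\mathcal{D}_5}|f|^2\,\Delta^{\mathbf{s}}(\mathrm{Im}\,z)\,d\nu$ converges absolutely, producing the families $\Theta^G(n,n')$ with $\xi_3<0$ and $n,n'>0$; and boundary values where the space survives only after analytic continuation, realized on a lower-dimensional boundary orbit of $\partial\Omega_5$, producing the degenerate families $\Theta^G(\eta_3,n,n')$ with $\xi_3=0$ and $n,n'\geq 0$. \emph{This step is the main obstacle}: because $\Omega_5$ is the dual of the Vinberg cone and hence \emph{not} symmetric, the classical Wallach-set computation for Jordan algebras does not apply, and one must instead use the explicit positivity analysis of Riesz distributions on homogeneous cones in \cite[Theorem 13]{Ishi11} together with the determinant computations of \cite{Arashi20}.

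For (b) I would use the moment map of Section \ref{Generaltho}: by \cite[Theorem 2.17]{RV85}, $\mu_{\pi_\xi}$ is a diffeomorphism onto a coadjoint orbit, so the coadjoint orbit of (a suitable multiple of) $\xi$ is a complete unitary invariant among these highest-weight-type representations, and it suffices to compute the orbits. Restricting to the torus $G_{iI_3}$, the reproducing-kernel vector is a weight vector of weight $(n,n')$, and this weight is preserved by equivalence; hence $(n,n')$ must agree whenever $\pi_\xi\cong\pi_{\xi'}$. When $n,n'>0$ the coadjoint action of the dilation and nilpotent parts of $S$, visible in the bracket relations of Section \ref{theholomor} (for instance $[E_3,A_3]=-E_3$), sweeps out the whole half-plane $\{\xi_3<0,\ \eta_3\in\mathbb{R}\}$ as a single orbit, so all such $\xi$ are equivalent, giving the classes $\Theta^G(n,n')$. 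When $\xi_3=0$ this orbit degenerates, $\eta_3$ becomes fixed, and $(\eta_3,n,n')$ are jointly invariant, giving the singleton classes $\Theta^G(\eta_3,n,n')$; the vanishing of the $E_3^*$-component $\xi_3$ also separates the boundary families from the interior ones, and Schur's lemma applied to the central and infinitesimal characters supplies the remaining non-equivalences.

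For (c), the Tirao--Wolf correspondence recalled above reduces the claim to showing that every one-dimensional representation of $\mathfrak{g}_-$ whose restriction to $\mathfrak{g}_{iI_3}$ integrates over $G_{iI_3}$ has the form $i\xi|_{\mathfrak{g}_-}$ with $\xi\in\mathcal{M}$. Such a character annihilates $[\mathfrak{g}_-,\mathfrak{g}_-]$; computing this derived subalgebra from the bracket relations of Section \ref{theholomor} forces the supporting directions to be precisely $E_3^*,A_3^*$ and the combinations $2W_i^*-E_i^*$ tying each $W_i$ to $E_i$, while torus-integrality forces the coefficients of the latter into $\mathbb{Z}$. This is exactly the description of $\mathcal{M}$, so every holomorphic multiplier representation of $G$ over $\mathcal{D}_5$ is some $\tau_\xi$, and its unitarization, when it exists, is $\pi_\xi$.
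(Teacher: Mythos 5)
The first thing to say is that the paper contains no proof of Theorem \ref{beginitemi}: it is imported wholesale from \cite{Arashi20} and \cite[Theorem 13(i) and (iii)]{Ishi11}, so there is no internal argument to measure your proposal against. Judged against the methodology of those references, your sketch points in the right direction. For (a), unitarizability is indeed settled there by reducing to positivity of a relatively invariant kernel, i.e.\ of a Riesz-type distribution on the homogeneous non-symmetric cone $\Omega_5$, and you correctly identify that the failure of the symmetric-cone (Wallach set) machinery is the crux. For (c), the Tirao--Wolf character computation you describe is essentially how the set $\mathcal{M}$ is manufactured in the first place.

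There are two concrete soft spots. First, in (a) you assert that the whole regime $\xi_3<0$, $n,n'>0$ is realized by absolutely convergent weighted Bergman integrals; for small parameters this fails and the Hilbert space exists only through analytic continuation of the Riesz measure, so your own positivity criterion is the one to lean on and the Bergman claim should be dropped rather than used to delimit $\Theta^G(n,n')$. Second, and more seriously, in (b) the step ``same coadjoint orbit of $G$ implies unitarily equivalent'' is asserted, not proved; coadjoint orbits are not a complete invariant for a general Lie group. The paper's own use of this circle of ideas (Proposition \ref{Foranynnin} together with Remark \ref{Forunderli}) is more careful: it restricts to the exponential solvable subgroup $B=\exp\mathfrak{b}$, where Kirillov theory genuinely gives a bijection with coadjoint orbits and \cite[Theorem 13(ii)]{Ishi11} identifies the relevant orbit. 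Your orbit sweep is correct as far as it goes ($\Ad{e^{tA_3}}$ rescales the $E_3^*$-component and $\Ad{e^{tE_3}}$ translates the $A_3^*$-component whenever $\xi_3\neq 0$, while fixing the $\mathfrak{s}_3\oplus\mathfrak{s}_3'$ directions), but to conclude $G$-equivalence you must glue the resulting $B$-equivalence to the matching of the $\exp\langle W_1,W_2\rangle$-weight $(n,n')$ on the reproducing-kernel vector; that gluing step is exactly what is missing from your write-up. For the singleton sets $\Theta^G(\eta_3,n,n')$ the ``if'' direction is vacuous, so only the separation argument is needed there, and your weight and infinitesimal-character remarks suffice for that.
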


From now on, for $\xi\in\mathcal{M}$ such that $\tau_\xi$ is unitarizable, we think of $\pi_\xi$ as any of the holomorphic multiplier representations of $G$ over $\mathcal{D}_5$.
We shall mention the converse of Theorem \ref{Letpimathc2}.
Let $\mathcal{H}^\xi$ be the representation space of $\pi_\xi$, let $\mathcal{K
}^\xi:\mathcal{D}_5\times\mathcal{D}_5\rightarrow\mathbb{C}$ be the reproducing kernel of $\mathcal{H
}^\xi$, and let $\mathcal{K}_{iI_3}^\xi\in\mathcal{H}^\xi$ be the function given by $\mathcal{K}_{iI_3}^\xi(z)=\mathcal{K}^\xi(z,iI_3)\,(z\in\mathcal{D}_5)$.
If the representation $d\pi_\xi$ of $\mathfrak{g}$ is extended to a complex representation, then we have 
\begin{equation}\label{dpixiover}
d\pi_\xi(\overline{x})\mathcal{K}_{iI_3}^\xi=i\overline{\xi(x)}\mathcal{K}_{iI_3}^\xi\quad(x\in\mathfrak{g}_-),
\end{equation}
which implies that $\pi_\xi$ is an irreducible CS representation of $G$ if $\dim \mathcal{H}^\xi>1$ (see {\cite[Proposition 4.1]{Lisiecki95}}).

\section{Generic CS representations}\label{GenericCSr}

For $n,n'\in\mathbb{Z}_{> 0}$, let $\xi_{n,n'}$ be any of the elements of $\Theta^G(n,n')$.
\begin{proposition}\label{Foranynnin}
For any $n,n',l,l'\in\mathbb{Z}_{>0}$, the representations $\pi_{\xi_{n,n'}}$ and $\overline{\pi_{\xi_{l,l'}}}$ are not unitarily equivalent.
\end{proposition}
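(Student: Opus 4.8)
The plan is to separate $\pi_{\xi_{n,n'}}$ from $\overline{\pi_{\xi_{l,l'}}}$ by their associated coadjoint orbits, which are invariants of the unitary equivalence class of a generic CS representation (the moment map is built intrinsically from $d\pi$ and the inner product, so an intertwiner carries one CS orbit to another with the same image). First I would identify the orbit attached to $\pi_{\xi_{n,n'}}$. Write $\mathcal{K}=\mathcal{K}_{iI_3}^{\xi_{n,n'}}$ for the coherent state; by \eqref{dpixiover} it is a joint eigenvector of $d\pi_{\xi_{n,n'}}(\mathfrak{g}_+)$, where $\mathfrak{g}_+=\overline{\mathfrak{g}_-}$. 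Decomposing $\mathfrak{g}_\mathbb{C}=\mathfrak{k}_\mathbb{C}\oplus\mathfrak{m}\oplus\overline{\mathfrak{m}}$ with $\mathfrak{g}_-=\mathfrak{k}_\mathbb{C}\oplus\mathfrak{m}$ and $\mathfrak{g}_+=\mathfrak{k}_\mathbb{C}\oplus\overline{\mathfrak{m}}$, I would treat the three summands separately: on $\mathfrak{k}_\mathbb{C}$ and $\overline{\mathfrak{m}}$ the eigenvalue relation applies directly, and on $\mathfrak{m}$ one passes to the adjoint using that $d\pi_{\xi_{n,n'}}(x)$ is skew-adjoint for $x\in\mathfrak{g}$. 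This yields $(d\pi_{\xi_{n,n'}}(x)\mathcal{K},\mathcal{K})=i\,\xi_{n,n'}(x)\,(\mathcal{K},\mathcal{K})$ for all $x\in\mathfrak{g}$, i.e. $\mu_{\pi_{\xi_{n,n'}}}([\mathcal{K}])=\xi_{n,n'}$, so the CS orbit through $[\mathcal{K}]$ maps onto $\cAd{G}\,\xi_{n,n'}$. Since $\mu_{\overline{\pi}}=-\mu_\pi$ under the conjugate-linear identification, the orbit attached to $\overline{\pi_{\xi_{l,l'}}}$ is $\cAd{G}\,(-\xi_{l,l'})$.

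Next I would produce an $\cAd{G}$-invariant separating these two orbits, using the distinguished role of $E_3$. From the bracket relations the only basis element bracketing nontrivially into $E_3$ is $A_3$, with $[A_3,E_3]=E_3$; hence $\ad{x}E_3\in\mathbb{R}E_3$ for every $x\in\mathfrak{g}$, so $\mathbb{R}E_3$ is a one-dimensional ideal and $\Ad{g}E_3=\chi(g)E_3$ for a homomorphism $\chi\colon G\to\mathbb{R}_{>0}$ (positivity because $G$ is connected). Consequently $\langle E_3,\cAd{g}\nu\rangle=\chi(g)^{-1}\langle E_3,\nu\rangle$ for all $\nu\in\mathfrak{g}^*$ and $g\in G$, so the sign of $\nu\mapsto\langle E_3,\nu\rangle$ is constant along each coadjoint orbit. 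On $\cAd{G}\,\xi_{n,n'}$ this sign equals that of $\xi_{n,n'}(E_3)=\xi_3<0$ (recall that membership in $\Theta^G(n,n')$ forces $\xi_3<0$), while on $\cAd{G}\,(-\xi_{l,l'})$ it equals that of $-\xi_{l,l'}(E_3)>0$. The two orbits are therefore disjoint, and I would conclude that $\pi_{\xi_{n,n'}}$ and $\overline{\pi_{\xi_{l,l'}}}$ cannot be unitarily equivalent.

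I expect the main obstacle to be making the reduction in the first paragraph fully rigorous: namely that the coadjoint orbit $\mu_\pi(M)$ is a genuine invariant independent of the choice of CS orbit $M$, together with the precise identification $\mu_{\pi_{\xi_{n,n'}}}([\mathcal{K}])=\xi_{n,n'}$. The latter is delicate because $E_3$ itself does not lie in $\mathfrak{g}_+$ — only its components across the splitting $\mathfrak{g}_\mathbb{C}=\mathfrak{k}_\mathbb{C}\oplus\mathfrak{m}\oplus\overline{\mathfrak{m}}$ contribute — so the interplay of \eqref{dpixiover} with skew-adjointness must be carried out componentwise. An alternative route that bypasses the choice-of-orbit subtlety is to show directly that $\tfrac{1}{i}d\pi_\xi(E_3)$ is negative definite whenever $\xi\in\Theta^G(l,l')$; granting this, $\langle E_3,\mu_{\overline{\pi_{\xi_{l,l'}}}}([w])\rangle>0$ for every nonzero smooth $w$, contradicting $\langle E_3,\mu_{\pi_{\xi_{n,n'}}}([\mathcal{K}])\rangle=\xi_3<0$ once $\mathcal{K}$ is transported by a hypothetical intertwiner. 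Proving this definiteness, however, appears to need the concrete Laplace-transform (positive-energy) realization of $\mathcal{H}^\xi$ rather than the abstract relations, since the commutator $[\tfrac{1}{i}d\pi(A_3),\tfrac{1}{i}d\pi(E_3)]=\tfrac{1}{i}d\pi(E_3)$ alone only forces the nonzero spectrum of $\tfrac{1}{i}d\pi(E_3)$ to be invariant under multiplication by $\mathbb{R}_{>0}$.
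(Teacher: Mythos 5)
Your separating invariant --- the sign of $\langle E_3,\cdot\rangle$, constant along coadjoint orbits because $\langle E_3\rangle$ is a one-dimensional ideal on which the connected group $G$ acts by positive scalars --- is exactly the one the paper uses, and your computations $\mu_{\pi_\xi}([\mathcal{K}_{iI_3}^\xi])=\xi$ (via \eqref{dpixiover} and skew-adjointness) and $\mu_{\overline{\pi}}=-\mu_\pi$ are correct. The gap is the one you flag yourself, and it is genuine: you attach the orbit $\cAd{G}\,\xi_{n,n'}$ to $\pi_{\xi_{n,n'}}$ only through the particular point $[\mathcal{K}_{iI_3}^{\xi_{n,n'}}]$. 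A unitary intertwiner $U$ onto $\overline{\mathcal{H}^{\xi_{l,l'}}}$ tells you only that $\cAd{G}\,\xi_{n,n'}$ is the moment image of \emph{some} CS orbit of $\overline{\pi_{\xi_{l,l'}}}$, namely $U(M)$; to reach a contradiction you must know either that every CS orbit of $\overline{\pi_{\xi_{l,l'}}}$ has moment image $\cAd{G}(-\xi_{l,l'})$, or that $\tfrac1i d\pi_{\xi_{l,l'}}(E_3)$ is negative semidefinite. Neither is established in your write-up: the first is essentially what Sections \ref{CSorbitand}--\ref{GenericCSr} are in the business of proving (so invoking it here risks circularity), and the second, as you correctly observe, does not follow from $[A_3,E_3]=E_3$ alone --- that relation only makes the spectrum of $\tfrac1i d\pi(E_3)$ stable under positive dilations, and the restriction of $\pi_\xi$ to $\exp\langle E_3,A_3\rangle$ need not be irreducible --- but requires the Laplace-transform realization of $\mathcal{H}^\xi$ from \cite{Arashi20, Ishi11}, where this operator becomes multiplication by a coordinate of fixed sign on the dual cone. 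As written, then, this is a correct plan with one unclosed step rather than a proof.

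The paper closes precisely this step by a different mechanism: it restricts both representations to the exponential solvable subgroup $B=\exp\mathfrak{b}$ with $\mathfrak{b}=\langle E_1,E_2,E_3,E_{3,1},E_{3,2},A_1,A_2,A_3,A_{3,1},A_{3,2}\rangle$. For exponential solvable groups the Kirillov correspondence is a genuine bijection between equivalence classes of irreducible unitary representations and coadjoint orbits \cite{Bernat}, so unitary equivalence of the restrictions forces equality of the associated $B$-orbits with no reference to CS orbits or moment maps; Ishi's Theorem 13(ii) identifies those orbits as the ones through $\mp(E_1^*+E_2^*+E_3^*)|_{\mathfrak{b}}$, and the same $E_3$-sign argument, now run inside $\mathfrak{b}^*$, finishes the proof. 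If you prefer to keep your moment-map formulation on all of $G$, the cleanest completion is your second route: prove negative semidefiniteness of $\tfrac1i d\pi_\xi(E_3)$ for $\xi\in\Theta^G(l,l')$ from the concrete positive-energy realization, after which the inequality $\langle E_3,\mu_{\overline{\pi_{\xi_{l,l'}}}}([w])\rangle\geq 0$ for all smooth $w\neq 0$ contradicts $\langle E_3,\mu_{\pi_{\xi_{n,n'}}}([\mathcal{K}_{iI_3}^{\xi_{n,n'}}])\rangle=\xi_3<0$ once $\mathcal{K}_{iI_3}^{\xi_{n,n'}}$ is transported by the hypothetical intertwiner.
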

\begin{proof}
Let $\mathfrak{b}=\langle E_1, E_2, E_3, E_{3,1}, E_{3,2}, A_1, A_2, A_3, A_{3,1}, A_{3,2}\rangle$, and let $B=\exp\mathfrak{b}\subset G$.
It is enough to show that $\pi_{\xi_{n,n'}}$ and $\overline{\pi_{\xi_{l,l'}}}$ are not equivalent as unitary representations of $B$.
Note that $B$ is an exponential solvable Lie group, so that the equivalence classes of irreducible unitary representations of $B$ are in one-one correspondence with the coadjoint orbits of $B$ in $\mathfrak{b}^*$ (see \cite{Bernat}).
By {\cite[Theorem 13(ii)]{Ishi11}}, the equivalence classes of $\pi_{\xi_{n,n'}}|_B$ and $\pi_{\xi_{l,l'}}|_B$ correspond to the coadjoint orbit through $-(E_1^*+E_2^*+E_3^*)|_\mathfrak{b}\in\mathfrak{b}^*$ (see Remark \ref{Forunderli} below for more detail).
Then we see that the equivalence class of $\overline{\pi_{\xi_{l,l'}}}$ corresponds to the coadjoint orbit through $(E_1^*+E_2^*+E_3^*)|_\mathfrak{b}\in\mathfrak{b}^*$.

Let $\eta$ be a linear form on $\mathfrak{b}$, and suppose that $\langle E_3,\eta\rangle >0$.
We have $\Ad{e^{tA_3}} E_3=e^tE_3\,(t\in\mathbb{R})$, and $E_3$ commutes with $E_1, E_2, E_3, E_{3,1}, E_{3,2}, A_1, A_2, A_{3,1}$, and $A_{3,2}$.
Thus $\langle E_3,\cAd{b}\eta \rangle=\langle \Ad{b^{-1}} E_3,\eta\rangle>0$ for $b\in B$.
This implies that the coadjoint orbit through $-(E_1^*+E_2^*+E_3^*)|_\mathfrak{b}$ and the one through $(E_1^*+E_2^*+E_3^*)|_\mathfrak{b}$ are different.
The proof is complete.
\end{proof}
\begin{remark}\label{Forunderli}
For $\xi=\xi(\xi_3,\eta_3,n,n')\in\Theta^G(n,n')$, we shall show that the equivalence class of $\pi_{\xi}|_B$ corresponds to the coadjoint orbit through $-(E_1^*+E_2^*+E_3^*)|_{\mathfrak{b}}$.
For $\underline{s}=(s_1, s_2, s_3)\in\mathbb{C}^3$, let $\alpha^{\underline{s}}=\sum_{k=1}^3 s_k A_k^*|_\mathfrak{b}\in(\mathfrak{b}^*)_\mathbb{C}$, and let $\chi^{\underline{s}}$ be the character of $B$ given by $\chi^{\underline{s}}(\exp x)=\exp\alpha^{\underline{s}}(x)\,(x\in\mathfrak{b})$.
Let us consider the action of $B$ on the holomorphic line bundle $\mathcal{D}_5\times \mathbb{C}$ given by 
\begin{equation*}
B\times\mathcal{D}_5\times\mathbb{C}\ni (b,z,\zeta)\mapsto (bz,\chi^{-\underline{s}/2}(b)\zeta)\in\mathcal{D}_5\times\mathbb{C},
\end{equation*}
and we denote the $B$-equivariant holomorphic line bundle by $L^{\underline{s}}$.
Now the isomorphism classes of $B$-equivariant holomorphic line bundles over $\mathcal{D}_5$ stand in one-one correspondence with the one-dimensional complex representations of $\mathfrak{b}_\mathbb{C}\cap\mathfrak{g}_-$, and $L^{\underline{s}}$ corresponds to $-\tfrac{i}{2}(\sum_{k=1}^3\mathrm{Re}\,s_kE_k^*+\mathrm{Im}\,s_kA_k^*)|_{\mathfrak{b}_\mathbb{C}\cap\mathfrak{g}_-}$, where we extend $\sum_{k=1}^3\mathrm{Re}\,s_kE_k^*+\mathrm{Im}\,s_kA_k^*$ to a complex linear form on $\mathfrak{g}_\mathbb{C}$.
Hence for $\underline{s}=(n,n',-2(\xi_3+i\eta_3))$, the representation $\pi_{L^{\underline{s}}}$ of $B$ is unitarily equivalent with $\pi_\xi|_B$.
For $\alpha\in\mathfrak{g}^*$, let $\mathfrak{b}_{\alpha}=\{ x\in \mathfrak{b};[y,x]=\alpha(y)x\quad\mbox{for all}\,y\in\langle A_1, A_2, A_3\rangle\}$.
Put $q_k=\sum_{3 \geq l>k\geq 1}\dim \mathfrak{b}_{(A_l^*-A_k^*)/2}\,(k=1,2,3)$.
Then we have $q_1=\dim\langle A_{3,1}\rangle=1$, $q_2=\dim \langle A_{3,2}\rangle =1$, $q_3=0$, and hence
\begin{equation}\label{Reskqk2qua}
\mathrm{Re}\,s_k>q_k/2\quad(k=1,2,3).
\end{equation}
According to {\cite[Theorem 13(ii)]{Ishi11}}, we can obtain the desired result from \eqref{Reskqk2qua}.
\end{remark}
Let us consider the set of equivalence classes of irreducible unitary representations of $G$.
For a unitary representation $\pi_0$ of a Lie group $G_0$, we denote the equivalence class of $\pi_0$ by $[\pi_0]$.
\begin{theorem}\label{Thesetofeq}
The set of equivalence classes of generic CS representations of $G$ is given by 
\begin{equation*}
\{[\pi_{\xi_{n,n'}}];n,n'\in\mathbb{Z}_{>0}\}\sqcup\{[\overline{\pi_{\xi_{n,n'}}}];n,n'\in\mathbb{Z}_{>0}\}.
\end{equation*}
\end{theorem}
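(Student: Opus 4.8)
The plan is to combine the reduction in Theorem \ref{Letpimathc2} with the classification of unitarizable multiplier representations in Theorem \ref{beginitemi}, and then to isolate the generic ones by a kernel computation. The decisive algebraic fact I will establish is that $\langle E_3\rangle$ is the \emph{unique minimal nonzero ideal} of $\mathfrak{g}$; consequently $\ker d\pi$ is either trivial or contains $E_3$, so for any irreducible $\pi$ the discreteness of $\ker\pi$ is equivalent to $d\pi(E_3)\neq 0$.

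First I would prove that $\langle E_3\rangle$ is contained in every nonzero ideal $\mathfrak{a}$ of $\mathfrak{g}$. Since $A_3\in\mathfrak{g}$, the ideal $\mathfrak{a}$ is $\mathrm{ad}(A_3)$-invariant, and $\mathrm{ad}(A_3)$ is diagonalizable with eigenvalues $0,\tfrac12,1$: the $1$-eigenspace is $\langle E_3\rangle$, the $\tfrac12$-eigenspace is $\langle E_{3,1},E_{3,2},A_{3,1},A_{3,2}\rangle$, and the $0$-eigenspace is $\mathfrak{s}_1\oplus\mathfrak{s}_2\oplus\langle A_3\rangle$ where $\mathfrak{s}_1=\langle E_1,A_1,W_1\rangle$ and $\mathfrak{s}_2=\langle E_2,A_2,W_2\rangle$ are copies of $\mathfrak{sl}(2,\mathbb{R})$. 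Thus $\mathfrak{a}$ splits along these eigenspaces. If $E_3\notin\mathfrak{a}$, then the relations $[E_{3,1},A_{3,1}]=-2E_3$ and $[E_{3,2},A_{3,2}]=-2E_3$ force the $\tfrac12$-part of $\mathfrak{a}$ to vanish, so $\mathfrak{a}\subset\mathfrak{s}_1\oplus\mathfrak{s}_2\oplus\langle A_3\rangle$; simplicity of each $\mathfrak{s}_i$ together with $[E_1,A_{3,1}]=-E_{3,1}$, $[E_2,A_{3,2}]=-E_{3,2}$ (which would push $\mathfrak{a}$ out of the $0$-eigenspace) and $[E_3,A_3]=-E_3$ then forces $\mathfrak{a}=0$. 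Hence every nonzero ideal contains $E_3$.

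Next I would identify the generic members. By Theorem \ref{Letpimathc2} a generic CS representation $\pi$ satisfies $\pi\cong\pi_\xi$ or $\overline{\pi}\cong\pi_\xi$ for a unitarizable $\tau_\xi$, and by Theorem \ref{beginitemi}(a) such $\xi$ lies in some $\Theta^G(n,n')$ or $\Theta^G(\eta_3,n,n')$. I claim genericity selects exactly the sets $\Theta^G(n,n')$. Within the framework of Remark \ref{Forunderli}, i.e.\ \cite[Theorem 13(ii)]{Ishi11} applied to $\pi_\xi|_B$, the coadjoint $B$-orbit attached to $\pi_\xi$ fails to annihilate $E_3$ precisely when $\mathrm{Re}\,s_3=-2\xi_3>0$, that is when $\xi_3<0$; because $\langle E_3\rangle$ is an ideal, this is exactly the condition $d\pi_\xi(E_3)\neq0$. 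So for $\xi\in\Theta^G(n,n')$ we get $d\pi_\xi(E_3)\neq0$, whence $\ker d\pi_\xi=0$ by the minimality result, while $\dim\mathcal{H}^\xi>1$ because $E_3=[A_3,E_3]\in[\mathfrak{g},\mathfrak{g}]$ cannot be killed by a character; by \eqref{dpixiover} and irreducibility of any unitarization, $\pi_\xi$ is then a generic CS representation, and so is $\overline{\pi_\xi}$. Conversely for $\xi\in\Theta^G(\eta_3,n,n')$ one has $\xi_3=0$, hence $d\pi_\xi(E_3)=0$, so $E_3\in\ker d\pi_\xi$ and $\ker\pi_\xi$ is nondiscrete; these are excluded.

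Finally I would check that the listed classes are pairwise distinct and the two families disjoint: inequivalence of $\pi_{\xi_{n,n'}}$ for distinct $(n,n')$ is Theorem \ref{beginitemi}(b) (distinct sets $\Theta^G(n,n')$), applied verbatim to the conjugates as well, and the disjointness of $\{[\pi_{\xi_{n,n'}}]\}$ from $\{[\overline{\pi_{\xi_{l,l'}}}]\}$ is exactly Proposition \ref{Foranynnin}. I expect the main obstacle to be the rigorous link $d\pi_\xi(E_3)\neq0\iff\xi_3<0$: the minimal-ideal lemma cleanly reduces the discreteness of the kernel to the single operator $d\pi_\xi(E_3)$, but deciding when this operator vanishes requires the orbit-method computation for $\pi_\xi|_B$ at the boundary value $\mathrm{Re}\,s_3=q_3/2=0$, which is the delicate point.
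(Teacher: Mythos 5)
Your reduction of genericity to the single condition $d\pi_\xi(E_3)\neq 0$ is sound, and your eigenspace argument (decomposing an ideal along the $0,\tfrac12,1$ eigenspaces of $\mathrm{ad}(A_3)$) that every nonzero ideal of $\mathfrak{g}$ contains $E_3$ is a clean alternative to the paper's brute-force Lemma \ref{l}(a). The positive direction also goes through, indeed more simply than you propose: since $\mu_{\pi_\xi}([\mathcal{K}^\xi_{iI_3}])=\xi$, the diagonal matrix coefficient of $d\pi_\xi(E_3)$ at $\mathcal{K}^\xi_{iI_3}$ equals $i\xi_3\|\mathcal{K}^\xi_{iI_3}\|^2\neq 0$ for $\xi_3<0$, so $d\pi_\xi(E_3)\neq 0$ without any appeal to the restriction to $B$. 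The problem is the converse: the step ``$\xi_3=0$, hence $d\pi_\xi(E_3)=0$'' is a genuine gap, and not a technicality --- by your own reduction it is exactly equivalent to the statement to be proved, namely that $\pi_{\xi(0,\eta_3,n,n')}$ is not generic. The moment map only gives $(d\pi_\xi(E_3)\mathcal{K}^\xi_{iI_3},\mathcal{K}^\xi_{iI_3})=0$, the vanishing of one diagonal coefficient of a skew-adjoint operator, which does not force the operator to vanish; and the tool you point to, \cite[Theorem 13(ii)]{Ishi11} as set up in Remark \ref{Forunderli}, applies only under the strict inequalities $\mathrm{Re}\,s_k>q_k/2$, which fail precisely at $\mathrm{Re}\,s_3=-2\xi_3=0$. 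You flag this as ``the delicate point'' but leave it unresolved, so the non-genericity half of the classification is not established. (That $d\pi_\xi(E_3)=0$ does hold here is obtained in the paper only in Sections \ref{Irreducibl}--\ref{Intertwini}, where $\pi_{\xi(0,\eta_3,n,n')}$ is explicitly identified with a representation factoring through $G\rightarrow\mathbb{R}_{>0}\times SL(2,\mathbb{R})\times SL(2,\mathbb{R})$.)

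The paper's proof avoids this issue by not working with $\ker\pi_\xi$ at all in the negative direction. It identifies, via the moment map, the CS orbit through $[\mathcal{K}^\xi_{iI_3}]$ with the coadjoint orbit through $\xi$, and invokes the general fact \eqref{mathrmLiek} that for a generic CS representation the kernel of the action $\alpha$ on a CS orbit has Lie algebra equal to the center of $\mathfrak{g}$, which is $\{0\}$ here. It then only has to compute the radical $\mathfrak{g}_\xi$ of the skew form $\xi([\cdot,\cdot])$: for $\xi_3=0$ one reads off $E_3\in\mathfrak{g}_\xi$, and since $\langle E_3\rangle$ is an ideal (so $\Ad{g}E_3\in\langle E_3\rangle$ for all $g$), $E_3$ lies in every isotropy algebra along the orbit, i.e.\ $E_3\in\mathrm{Lie}(\ker\alpha)$ --- contradiction. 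This is pure linear algebra and requires no information about the Hilbert space $\mathcal{H}^\xi$. If you wish to keep your kernel-of-$d\pi_\xi$ framework, you must supply an actual proof that $d\pi_\xi(E_3)=0$ when $\xi_3=0$ (for instance by identifying $\mathcal{H}^\xi$ concretely as in Theorem \ref{Themapmath}); otherwise, switch to the orbit/\eqref{mathrmLiek} argument for part (b). Your final paragraph on pairwise inequivalence via Theorem \ref{beginitemi}(b) and Proposition \ref{Foranynnin} matches the paper and is fine.
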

\begin{proof}
By Theorems \ref{Letpimathc2} and \ref{beginitemi}, it is enough to show that
\begin{itemize}
 \item[(a)] For any $n,n'\in\mathbb{Z}_{>0}$, and $\xi\in \Theta^G(n,n')$, the representation $\pi_\xi$ is generic,
 \item[(b)] For any $\eta\in\mathbb{R}, n,n'\in\mathbb{Z}_{\geq 0}$, and $\xi\in\Theta^G(\eta_3,n,n')$, the representation $\pi_\xi$ is not generic.
\end{itemize}
For $\xi\in\mathcal{M}$ with $\tau_\xi$ unitarizable, we have $\mu_{\pi_\xi}([\mathcal{K}_{iI_3}^\xi])=\xi$, and hence we can identify the coadjoint orbit through $\xi\in\mathfrak{g}^*$ with the CS orbit through $[\mathcal{K}_{iI_3}^\xi]\in\mathbb{P}(\mathcal{H}^\xi)$.
We denote by $\alpha$ the action of $G$ on the coadjoint orbit through $\xi$.
Let $G_\xi$ be the isotropy subgroup of $G$ at $\xi$. We note that
$\mathfrak{g}_\xi=\{x\in\mathfrak{g};\xi([x,y])=0\text{ for all }y\in\mathfrak{g}\}$.
The matrix of the skew-symmetric bilinear form $\xi([x,y])$ with respect to the basis $\{E_1,E_2,E_3,E_{3,1},E_{3,2},A_1,A_2,A_3,A_{3,1},A_{3,2},W_1,W_2\}$ is given by 
\begin{equation*}
\left[\begin{array}{cccccccccccc}0 & 0 & 0 & 0 & 0 & \frac{n}{2} & 0 & 0 & 0 & 0 & 0 & 0\\
0 & 0 & 0 & 0 & 0 & 0 & \frac{n'}{2} & 0 & 0 & 0 & 0 & 0\\
0 & 0 & 0 & 0 & 0 & 0 & 0 & -\xi_3 & 0 & 0 & 0 & 0\\
0 & 0 & 0 & 0 & 0 & 0 & 0 & 0 & -2 \xi_3 & 0 & 0 & 0\\
0 & 0 & 0 & 0 & 0 & 0 & 0 & 0 & 0 & -2 \xi_3 & 0 & 0\\
-\frac{n}{2} & 0 & 0 & 0 & 0 & 0 & 0 & 0 & 0 & 0 & 0 & 0\\
0 & -\frac{n'}{2} & 0 & 0 & 0 & 0 & 0 & 0 & 0 & 0 & 0 & 0\\
0 & 0 & \xi_3 & 0 & 0 & 0 & 0 & 0 & 0 & 0 & 0 & 0\\
0 & 0 & 0 & 2 \xi_3 & 0 & 0 & 0 & 0 & 0 & 0 & 0 & 0\\
0 & 0 & 0 & 0 & 2 \xi_3 & 0 & 0 & 0 & 0 & 0 & 0 & 0\\
0 & 0 & 0 & 0 & 0 & 0 & 0 & 0 & 0 & 0 & 0 & 0\\
0 & 0 & 0 & 0 & 0 & 0 & 0 & 0 & 0 & 0 & 0 & 0\end{array}\right].
\end{equation*}

(a) Since $\xi_3<0$ and $n,n'\in\mathbb{Z}_{>0}$, it follows that $\mathfrak{g}_\xi=\langle W_1,W_2\rangle$.
Now we have $\mathrm{Lie}(\ker \pi_\xi)\subset \mathrm{Lie}(\ker \alpha)=0$, and hence $\pi_\xi$ is generic.

(b) We have $E_3\in\mathfrak{g}_\xi$.
Thus $\dim\ker\alpha\geq 1$.
We see from \eqref{mathrmLiek} that $\pi_\xi$ is not generic.
\end{proof}

\section{Irreducible non-generic CS representations}\label{Irreducibl}

Let $\mathfrak{h}_5=\langle E_3,E_{3,1},E_{3,2}, A_{3,1},A_{3,2}\rangle$, $\mathfrak{h}_3=\langle E_3,E_{3,1},A_{3,1}\rangle$, $\mathfrak{h}_3'=\langle E_3,E_{3,2},A_{3,2}\rangle$, $\mathfrak{a}_1=\langle A_3\rangle$, $\mathfrak{s}_3=\langle E_1,A_1,W_1\rangle$, $\mathfrak{s}_3'=\langle E_2,A_2,W_2\rangle$.
Then we have the following lemma.
\begin{lemma}\label{l}
\begin{itemize}
\item[(a)] Every nontrivial ideal in $\mathfrak{g}$ contains $\langle E_3\rangle$.
\item [(b)]Let $\mathfrak{h}$ be an ideal in $\mathfrak{g}$ such that $\langle E_3\rangle \subsetneq \mathfrak{h}$. Then $\mathfrak{h}$ contains $\mathfrak{h}_3$ or $\mathfrak{h}'_3$.
\item[(c)] Let $\mathfrak{h}$ be an ideal in $\mathfrak{g}$ such that $\mathfrak{h}'_3\subsetneq\mathfrak{h}$. Then $\mathfrak{h}$ contains $\mathfrak{h}_5$ or $\mathfrak{h}'_3\oplus\mathfrak{s}'_3$.
\item[(d)] Let $\mathfrak{h}$ be an ideal in $\mathfrak{g}$ such that $\mathfrak{h}'_3\oplus\mathfrak{s}'_3\subsetneq\mathfrak{h}$. Then $\mathfrak{h}$ contains $\mathfrak{h}_5\oplus\mathfrak{s}'_3$. 
\end{itemize}
\end{lemma}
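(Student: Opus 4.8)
The whole lemma rests on diagonalizing the adjoint action of the abelian subalgebra $\mathfrak{a}=\langle A_1,A_2,A_3\rangle$. The first step is to observe that $A_1,A_2,A_3$ commute, so $\ad{A_1},\ad{A_2},\ad{A_3}$ are commuting operators on $\mathfrak{g}$, and that they are simultaneously diagonalizable. Indeed the relations $[A_1,W_1]=-(W_1+2E_1)$ and $[A_2,W_2]=-(W_2+2E_2)$ show that $W_1+E_1$ and $W_2+E_2$ are joint eigenvectors, so a common eigenbasis is $E_1,E_2,E_3,E_{3,1},E_{3,2},A_{3,1},A_{3,2},A_1,A_2,A_3,W_1+E_1,W_2+E_2$. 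Writing the weight of a joint eigenvector $X$ as $\mu=(\mu_1,\mu_2,\mu_3)$ with $[A_i,X]=\mu_iX$, the nonzero weights are $E_1\colon(1,0,0)$, $E_2\colon(0,1,0)$, $E_3\colon(0,0,1)$, $E_{3,1}\colon(\tfrac12,0,\tfrac12)$, $E_{3,2}\colon(0,\tfrac12,\tfrac12)$, $A_{3,1}\colon(-\tfrac12,0,\tfrac12)$, $A_{3,2}\colon(0,-\tfrac12,\tfrac12)$, $W_1+E_1\colon(-1,0,0)$, $W_2+E_2\colon(0,-1,0)$; these are pairwise distinct, and the zero weight space is exactly $\mathfrak{a}$.

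The decisive consequence is that any ideal $\mathfrak{h}$, being $\ad{A_i}$-invariant for $i=1,2,3$, splits as $\mathfrak{h}=(\mathfrak{h}\cap\mathfrak{a})\oplus\bigoplus_{\mu\neq0}(\mathfrak{h}\cap\mathfrak{g}_\mu)$, where $\mathfrak{g}_\mu$ is the (one-dimensional) weight space. Hence $\mathfrak{h}$ is completely determined by the subspace $\mathfrak{h}\cap\mathfrak{a}\subseteq\mathfrak{a}$ together with the set of those weight vectors above that it contains, and each statement reduces to a finite propagation check: beginning from one weight vector, or from one nonzero $Z=aA_1+bA_2+cA_3\in\mathfrak{h}\cap\mathfrak{a}$, I would bracket with suitable generators and read off which further weight vectors are forced into $\mathfrak{h}$.

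The propagation rules I would extract from the bracket table are symmetric in the two indices $1$ and $2$. On the first strand, $[E_{3,1},W_1]=A_{3,1}$ and $[A_{3,1},W_1]=-E_{3,1}$ tie $E_{3,1}$ and $A_{3,1}$ together, $[E_{3,1},A_{3,1}]=-2E_3$ produces $E_3$, $[E_1,A_{3,1}]=-E_{3,1}$ lets $E_1$ reach $E_{3,1}$, and $[W_1+E_1,E_{3,1}]=-A_{3,1}$ lets $W_1+E_1$ reach $A_{3,1}$; dually $[E_2,W_2]=2A_2$ and $[A_2,W_2]=-(W_2+2E_2)$ let $E_2$ generate all of $\mathfrak{s}'_3=\langle E_2,A_2,W_2\rangle$. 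For $Z\in\mathfrak{a}$ the identities $[Z,E_1]=aE_1$, $[Z,E_2]=bE_2$, $[Z,E_{3,1}]=\tfrac{a+c}{2}E_{3,1}$ and $[Z,E_{3,2}]=\tfrac{b+c}{2}E_{3,2}$ isolate single weight vectors. With these rules: part (a) holds because every weight vector and every nonzero $Z$ propagates down to $E_3$; part (b) because a generator strictly beyond $\langle E_3\rangle$ forces one of $E_{3,1},A_{3,1},E_{3,2},A_{3,2}$, hence $\mathfrak{h}_3$ or $\mathfrak{h}'_3$; part (c) because a generator beyond $\mathfrak{h}'_3$ is either of first-strand type (forcing $E_{3,1},A_{3,1}$, hence $\mathfrak{h}_5$) or of second-strand type (forcing $E_2$, hence $\mathfrak{s}'_3$ and thus $\mathfrak{h}'_3\oplus\mathfrak{s}'_3$); and part (d) because a generator beyond $\mathfrak{h}'_3\oplus\mathfrak{s}'_3$ must be of first-strand type, forcing $E_{3,1},A_{3,1}$ and hence $\mathfrak{h}_5\oplus\mathfrak{s}'_3$.

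The one place demanding care is the bookkeeping for $\mathfrak{h}\cap\mathfrak{a}$, which is an arbitrary subspace of the three-dimensional $\mathfrak{a}$ rather than a coordinate one. For a general $Z=aA_1+bA_2+cA_3$ I must split on which of $a,b,c$—and which combinations $a+c$, $b+c$—vanish, making sure the degenerate directions (for instance a $Z$ with $a+c=b+c=0$, which is annihilated on both $E_{3,1}$ and $E_{3,2}$) are still caught by $[Z,E_1]=aE_1$ or $[Z,E_2]=bE_2$. Checking that this case split is exhaustive in parts (c) and (d), where $A_2$ already belongs to $\mathfrak{h}\cap\mathfrak{a}$ by hypothesis so that only the directions transverse to $A_2$ are new, is the main obstacle; the remaining verifications are the mechanical applications of the propagation rules above.
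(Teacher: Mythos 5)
Your proof is correct, but it is organized around a genuinely different principle than the paper's. The paper argues each part by contradiction on a completely general element $x=e_1E_1+\dots+k_2W_2$ of the ideal: it brackets $x$ (and iterated brackets) against hand-picked generators, reads off that each coefficient must vanish modulo the smaller ideal $\mathfrak{h}'$, and concludes $\mathfrak{h}=\mathfrak{h}'$. You instead diagonalize $\mathrm{ad}(A_1),\mathrm{ad}(A_2),\mathrm{ad}(A_3)$ simultaneously (your weight list checks out against the bracket table, including the slightly non-obvious eigenvectors $W_1+E_1$ and $W_2+E_2$), so that any ideal splits as $(\mathfrak{h}\cap\mathfrak{a})\oplus\bigoplus_{\mu\neq 0}(\mathfrak{h}\cap\mathfrak{g}_\mu)$ with each nonzero weight space one-dimensional; this replaces the paper's coefficient-killing with a finite closure computation on nine weight lines plus a case split on $\mathfrak{h}\cap\mathfrak{a}$. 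What your route buys is transparency: the isolation of coefficients comes for free from the distinctness of the weights, the same propagation rules serve all four parts uniformly, and the method in fact yields the full ideal lattice of Figure \ref{fig1} rather than just the four covering relations. What the paper's route buys is that it needs no preliminary structural observation and works directly from the bracket table. Two small points to tighten: for the ``second-strand'' generator $W_2+E_2$ you should record explicitly that $[W_2+E_2,W_2]=2A_2$ and then $[A_2,W_2]=-(W_2+2E_2)$ recover $E_2$ and $W_2$ separately (your stated rules only treat propagation starting from $E_2$); and your closing remark that $A_2\in\mathfrak{h}\cap\mathfrak{a}$ ``by hypothesis'' applies only to part (d), since $\mathfrak{h}'_3=\langle E_3,E_{3,2},A_{3,2}\rangle$ in part (c) meets $\mathfrak{a}$ trivially --- the case analysis there must run over an arbitrary nonzero $Z=aA_1+bA_2+cA_3$, which your rules ($a\neq 0$ gives $E_1$, $b\neq 0$ gives $E_2$, $a=b=0$ and $c\neq 0$ gives $E_{3,1}$) do handle exhaustively.
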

\begin{proof}
Let $x=e_1E_1+e_2E_2+e_3E_3+e_{3,1}E_{3,1}+e_{3,2}E_{3,2}+a_1A_1+a_2A_2+a_3A_3+a_{3,1}A_{3,1}+a_{3,2}A_{3,2}+k_1W_1+k_2W_2$ with $e_1, e_2, e_3, e_{3,1}, e_{3,2}, a_1, a_2, a_3, a_{3,1}, a_{3,2}, k_1, k_2\in\mathbb{R}$.

(a) Suppose that $x$ is contained in an ideal $\mathfrak{h}$ of $\mathfrak{g}$ such that $E_3\notin\mathfrak{h}$.
Since $[E_3,x]=-a_3E_3$, we have $a_3=0$. Then we have
\begin{equation*}\begin{split}
[E_{3,1},x]&=-\tfrac{a_1}{2}E_{3,1}-2a_{3,1}E_3+k_1A_{3,1},\\
[E_{3,1},[E_{3,1},x]]&=-2k_1E_3,\quad \quad[A_{3,1},[E_{3,1},x]]=-a_1E_3,\\
[E_{3,2},x]&=-\tfrac{a_2}{2}E_{3,2}-2a_{3,2}E_3+k_2A_{3,2},\\
[E_{3,2},[E_{3,2},x]]&=-2k_2E_3,\quad\quad[A_{3,2},[E_{3,2},x]]=-a_2E_3,
\end{split}\end{equation*}
so that $a_1=a_2=a_{3,1}=a_{3,2}=k_1=k_2=0$. Next,
\begin{equation*}\begin{split}
[A_{3,1},x]&=e_1E_{3,1}+2e_{3,1}E_3,\quad\quad[A_{3,1},[A_{3,1},x]]=2e_1E_3,\\
[A_{3,2},x]&=e_2E_{3,2}+2e_{3,2}E_3,\quad\quad[A_{3,2},[A_{3,2},x]]=2e_2E_3,
\end{split}\end{equation*}
which imply that $e_1=e_2=e_{3,1}=e_{3,2}=0$.
Thus $x=e_3 E_{3}=0$ and $\mathfrak{h}=0$. 
Therefore, every nontrivial ideal of $\mathfrak{g}$ contains $E_3$. 

(b)
Let $\mathfrak{h}'=\langle E_3\rangle$.
It is enough to show that $\tilde{\mathfrak{h}}=\mathfrak{h}/\mathfrak{h}'$ contains $sE_{3,1}+tA_{3,1}+\mathfrak{h}'$ with $s^2+t^2\neq 0$ or $sE_{3,2}+tA_{3,2}+\mathfrak{h}'$ with $s^2+t^2\neq 0$.
Arguing contradiction, assume that $\tilde{\mathfrak{h}}$ does not contain either of them.
Let $x\in\mathfrak{h}$.
We have
\begin{equation}\begin{split}\label{E31xfrac12}
[E_{3,1},x]&=-\tfrac{1}{2}(a_1+a_3)E_{3,1}+k_1A_{3,1},\quad
[E_{3,2},x]=-\tfrac{1}{2}(a_2+a_3)E_{3,2}+k_2A_{3,2},\\
[A_3,x]&=\tfrac{1}{2}(e_{3,1}E_{3,1}+e_{3,2}E_{3,2}+a_{3,1}A_{3,1}+a_{3,2}A_{3,2}),\\
[W_1,[A_3,x]]&=\tfrac{1}{2}(a_{3,1}E_{3,1}-e_{3,1}A_{3,1})\quad(\mathrm{mod}\,\mathfrak{h}'),
\end{split}\end{equation}
so that $e_{3,1}=e_{3,2}=a_1+a_3=a_2+a_3=a_{3,1}=a_{3,2}=k_1=k_2=0$.
We also have
\begin{equation*}
[A_{3,1},x]=e_1E_{3,1}+\tfrac{a_1-a_3}{2}A_{3,1},\quad
[A_{3,2},x]=e_2E_{3,2}+\tfrac{a_2-a_3}{2}A_{3,2}\quad(\mathrm{mod}\,\mathfrak{h}'),
\end{equation*}
so that $e_1=e_2=a_1=a_2=a_3=0$. Hence, $\tilde{\mathfrak{h}}=0$, which contradicts the assumption.

(c)
Let $\mathfrak{h}'=\mathfrak{h}'_3$.
It is enough to show that $\tilde{\mathfrak{h}}=\mathfrak{h}/\mathfrak{h}'$ contains $sE_{3,1}+tA_{3,1}+\mathfrak{h}'$ with $s^2+t^2\neq 1$ or $sE_2+tA_2+\mathfrak{h}'$ with $s^2+t^2\neq 0$. 
Arguing contradiction, assume that $\tilde{\mathfrak{h}}$ does not contain either of them.
Let $x\in\mathfrak{h}$.
We have 
\begin{equation}\begin{split}\label{E1a1E1a312}
[E_1,x]&=-a_1E_1-a_{3,1}E_{3,1}+2k_1A_1,\quad[A_{3,1},[E_1,x]]=-a_1E_{3,1}+k_1A_{3,1},\\
[A_3,x]&=\frac{1}{2}(e_{3,1}E_{3,1}+a_{3,1}A_{3,1}),\quad[E_2,x]=-a_2E_2+2k_2A_2\quad(\mathrm{mod}\,\mathfrak{h}'),
\end{split}\end{equation}
so that $e_{3,1}=a_1=a_2=a_{3,1}=k_1=k_2=0$.
Next,
\begin{equation*}[A_2,x]=e_2E_2,\quad[A_{3,1},x]=e_1E_{3,1}-\tfrac{a_3}{2}A_{3,1}\quad(\mathrm{mod}\,\mathfrak{h}'),
\end{equation*}
which implies that $e_1=e_2=a_3=0$.
Hence, $x\in\mathfrak{h}'$, which contradicts the assumption.

(d)
Let $\mathfrak{h}'=\mathfrak{h}'_3\oplus\mathfrak{s}'_3$.
It is enough to show that $\tilde{\mathfrak{h}}=\mathfrak{h}/\mathfrak{h}'$ contains $sE_{3,1}+tA_{3,1}+\mathfrak{h}'$ with $s^2+t^2\neq 0$.
Arguing contradiction, assume that $\tilde{\mathfrak{h}}$ does not contain such an element.
Let $x\in\mathfrak{h}$.
From \eqref{E31xfrac12} and \eqref{E1a1E1a312}, we see that $e_{3,1}=a_1=a_3=a_{3,1}=k_1=0$. Since 
\begin{equation*}\begin{split}
[W_1,x]&=-2e_1A_1, \quad [A_{3,1},[W_1,x]]=-e_1A_{3,1}\quad(\mathrm{mod}\,\mathfrak{h}'),
\end{split}\end{equation*}
we obtain $e_1=0$.
Hence, $x\in\mathfrak{h}'$, which contradicts the assumption.
\end{proof}

If we take into account Lemma \ref{l} and that $\mathfrak{s}_3\oplus\mathfrak{s}_3'$ is semisimple, it is not hard to determine all ideals of $\mathfrak{g}$.
Figure \ref{fig1} gives the Hasse diagram of the set of all nontrivial ideals of $\mathfrak{g}$, ordered by inclusion.
\begin{proof}[Proof of Proposition \textup{\ref{Anyautomor}}]
We have $\varphi (\mathfrak{h}_3)=\mathfrak{h}_3$ or $\varphi(\mathfrak{h}_3)=\mathfrak{h}'_3$, and it
is enough to show that if $\varphi (\mathfrak{h}_3)=\mathfrak{h}_3$, then $\varphi$ can be written as $\varphi=\sigma^{\varepsilon'} \circ\Ad{g}$ for some $g\in G$ and for some $\varepsilon'\in\{0,1\}$.
Let $\varphi(\mathfrak{h}_3)=\mathfrak{h}_3$.
Let us consider the adjoint action of $G$ on $\mathfrak{g}$.
The subgroups $\exp \mathfrak{a}_1, \exp \mathfrak{s}_3\subset G$ act on the ideal $\mathfrak{h}_3$ of $\mathfrak{g}$ by dilations 
\begin{equation*}
\mathfrak{h}_3\ni e_3E_3+e_{3,1}E_{3,1}+a_{3,1}A_{3,1}\mapsto r^2e_3E_3+re_{3,1}E_{3,1}+ra_{3,1}A_{3,1}\in
\mathfrak{h}_3\quad(r>0)
\end{equation*}
and symplectic maps
\begin{equation*}\begin{split}
\mathfrak{h}_3\ni e_3E_3+e_{3,1}E_{3,1}+a_{3,1}A_{3,1}\mapsto e_3E_3+(e_{3,1}\alpha +a_{3,1}\beta)E_{3,1}+(e_{3,1}\gamma+&a_{3,1}\delta)A_{3,1}\in\mathfrak{h}_3\\&(\alpha\delta-\beta\gamma=1),
\end{split}\end{equation*}
respectively.
It is well known that the automorphism group of $\mathfrak{h}_3$ is generated by inner automorphisms, symplectic maps, dilations, and inversion
\begin{equation*}
\mathfrak{h}_3\ni e_3E_3+e_{3,1}E_{3,1}+a_{3,1}A_{3,1}\mapsto -e_3E_3+a_{3,1}E_{3,1}+e_{3,1}A_{3,1}\in\mathfrak{h}_3.
\end{equation*}
Thus we have $\varphi\circ \Ad{g}|_{\mathfrak{h}_3}=\mathrm{id}_{\mathfrak{h}_3}$ or $\varphi\circ \Ad{g}|_{\mathfrak{h}_3}=\sigma|_{\mathfrak{h}_3}$ for some $g\in\exp\mathfrak{h}_3\oplus\mathfrak{a}_1\oplus\mathfrak{s}_3\subset G$.

Now it is enough to show that if $\varphi|_{\mathfrak{h}_3}=\mathrm{id}_{\mathfrak{h}_3}$, then $\varphi\circ \Ad{g}=\mathrm{id}_{\mathfrak{g}}$ for some $g\in G$.
Let $\varphi|_{\mathfrak{h}_3}=\mathrm{id}_{\mathfrak{h}_3}$.
We have $\varphi\circ \Ad{g}|_{\mathfrak{h}_5}=\mathrm{id}_{\mathfrak{h}_5}$ for some $g\in G$.
Hence we may and do assume that $\varphi|_{\mathfrak{h}_5}=\mathrm{id}_{\mathfrak{h}_5}$.
Let us consider the subrepresentation $(\mathrm{ad},\mathfrak{h}_5)$ of the adjoint representation $\mathrm{ad}$ of $\mathfrak{g}$.
Then the kernel of the subrepresentation equals $\langle E_3\rangle$ (see Remark \ref{Forxe1E1e2} below), and hence it follows that $\varphi(A_3)=A_3+e_3E_3\,$ with $e_3\in\mathbb{R}$.
Moreover we see that $\varphi\circ \Ad{g}|_{\mathfrak{h}_5\oplus\mathfrak{a}_1}=\mathrm{id}_{\mathfrak{h}_5\oplus\mathfrak{a}_1}$ for some $g\in \exp\langle E_3\rangle\subset G$.
Let us consider the subrepresentation $(\mathrm{ad},\mathfrak{h}_5\oplus\mathfrak{a}_1)$ of the adjoint representation $\mathrm{ad}$ of $\mathfrak{g}$.
Then the kernel of the subrepresentation equals $\{0\}$ (see Remark \ref{Forxe1E1e2} below), and hence it follows that an automorphism of $\mathfrak{g}$ which is the identity on $\mathfrak{h}_5\oplus\mathfrak{a}_1$ is the identity on $\mathfrak{g}$.
The proof is complete.
\end{proof}
\begin{remark}\label{Forxe1E1e2}
For $x=e_1E_1+e_2E_2+e_3E_3+e_{3,1}E_{3,1}+e_{3,2}E_{3,2}+a_1A_1+a_2A_2+a_3A_3+a_{3,1}A_{3,1}+a_{3,2}A_{3,2}+k_1W_1+k_2W_2$ with $e_1, e_2, e_3, e_{3,1}, e_{3,2}, a_1, a_2, a_3, a_{3,1}, a_{3,2}, k_1, k_2\in\mathbb{R}$, let us see the matrix of $\ad{x}:\mathfrak{h}_5\oplus\mathfrak{a}_1\rightarrow\mathfrak{h}_5\oplus\mathfrak{a}_1$ with respect to the basis $\{E_3,E_{3,1},E_{3,2},A_3,A_{3,1},A_{3,2}\}$.
The matrix is given by 
\begin{equation*}
\left[\begin{array}{cccccc}
		a_{3}&	2a_{3,1}&	2a_{3,2}&	-e_{3}&	-2e_{3,1}&	-2e_{3,2}\\
		0&	a_{3}/2+a_{1}/2&	0&	-e_{3,1}/2&	k_{1}-e_{1}&	0\\
		0&	0&	a_{3}/2+a_{2}/2&	-e_{3,2}/2&	0&	k_{2}-e_{2}\\
		0&	0&	0&	0&	0&	0\\
		0&	-k_{1}&	0&	-a_{3,1}/2&	a_{3}/2-a_{1}/2&	0\\
		0&	0&	-k_{2}&	-a_{3,2}/2&	0&	a_{3}/2-a_{2}/2
\end{array}\right].
\end{equation*}
\end{remark}
By the definition of CS representation, if all generic 
CS representations of all the quotient groups of $G$ by connected closed normal subgroups are given, then we can obtain all irreducible CS representations of $G$ by composing the quotient maps.
Let $\tilde{G}$ be the quotient group by a connected closed normal subgroup of $G$, and let $\tilde{G}\neq G, \{e\}$.
According to Figure \ref{fig1}, it is enough to consider the following cases:
\begin{equation*}\begin{split}
&\textup{(i)}\,\tilde{\mathfrak{g}}\simeq\mathbb{R},\quad \textup{(ii)}\,\tilde{\mathfrak{g}}\simeq\mathfrak{sl}(2,\mathbb{R}),\quad
\textup{(iii)}\,\tilde{\mathfrak{g}}\simeq\mathbb{R}\oplus\mathfrak{sl}(2,\mathbb{R}),\quad \textup{(iv)}\,\tilde{\mathfrak{g}}\simeq\mathfrak{sl}(2,\mathbb{R})\oplus\mathfrak{sl}(2,\mathbb{R}),
\\&\textup{(v)}\,\tilde{\mathfrak{g}}\simeq\mathbb{R}\oplus\mathfrak{sl}(2,\mathbb{R})\oplus\mathfrak{sl}(2,\mathbb{R}),\quad \textup{(vi)}\,\tilde{\mathfrak{g}}=\mathfrak{g}/\mathfrak{h}_3'\oplus\mathfrak{s}_3',\quad\textup{(vii)}\,\tilde{\mathfrak{g}}=\mathfrak{g}/\mathfrak{h}_3',\quad\textup{(viii)}\,\tilde{\mathfrak{g}}=\mathfrak{g}/\langle E_3\rangle.
\end{split}\end{equation*}
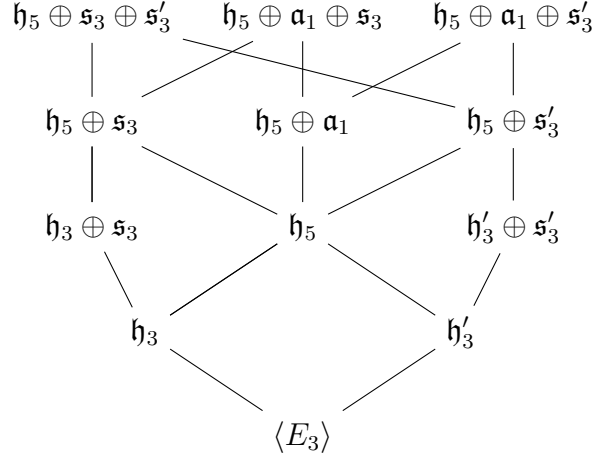
\begin{figure}[htpp]
\begin{tikzpicture}[scale=.7]
 
 \node (-40) at (-4,0) {$\mathfrak{h}_5\oplus\mathfrak{s}_3\oplus\mathfrak{s}_3'$};
 \node (00) at (0,0) {$\mathfrak{h}_5\oplus\mathfrak{a}_1\oplus\mathfrak{s}_3$};
 \node (40) at (4,0) {$\mathfrak{h}_5\oplus\mathfrak{a}_1\oplus\mathfrak{s}_3'$};
 \node (-4-2) at (-4,-2) {$\mathfrak{h}_5\oplus\mathfrak{s}_3$};
 \node (0-2) at (0,-2) {$\mathfrak{h}_5\oplus\mathfrak{a}_1$};
 \node (4-2) at (4,-2) {$\mathfrak{h}_5\oplus\mathfrak{s}_3'$};
 \node (-4-4) at (-4,-4) {$\mathfrak{h}_3\oplus\mathfrak{s}_3$};
 \node (0-4) at (0,-4)
 {$\mathfrak{h}_5$};
 \node (4-4) at (4,-4) {$\mathfrak{h}_3'\oplus\mathfrak{s}_3'$};
 \node (-3-6) at (-3,-6) {$\mathfrak{h}_3$};
 \node (3-6) at (3,-6) {$\mathfrak{h}_3'$};
 \node (0-8) at (0,-8) {$\langle E_3\rangle$};
 \draw (-40) -- (4-2);
 \draw (00) -- (-4-2) -- (-4-4);
 \draw (00) --(0-2) -- (0-4) -- (3-6);
 \draw (40) -- (0-2);
\draw (40) -- (4-2) -- (0-4) -- (-3-6);
 \draw (-40) -- (-4-2) -- (0-4) -- (-3-6);
 \draw (-4-2) -- (-4-4) -- (-3-6) -- (0-8);
 \draw (4-2) -- (4-4) -- (3-6) -- (0-8);
\end{tikzpicture} \caption{The Hasse diagram of the set of all nontrivial ideals of $\mathfrak{g}$}\label{fig1}
\end{figure}

However $\tilde{G}$ does not admit generic CS representations in the cases (vi)-(viii).
We shall prove this.
Suppose that $M$ is a CS orbit of a generic CS representation of $\tilde{G}$.
Let $K$ be the isotropy subgroup of $\tilde{G}$ at some point $m_0$ of $M$.
We shall seek a maximal compact subgroup of $\mathop{\mathrm{Int}}\tilde{\mathfrak{g}}$.
\begin{proposition}
\begin{itemize}
\item[(a)] We have the following isomorphisms:
$\Int{\tilde{\mathfrak{g}}}\simeq G/\exp \mathfrak{h}_3'\oplus\mathfrak{s}_3'$ in the case \textup{(vi)}, $\Int{\tilde{\mathfrak{g}}}\simeq G/\exp \mathfrak{h}'_3$ in the case \textup{(vii)}, and $\Int{\tilde{\mathfrak{g}}}\simeq G/\exp \langle E_3\rangle$ in the case \textup{(viii)}.
\item[(b)]The maximal compact subgroups of $G/\exp \mathfrak{h}_3'\oplus\mathfrak{s}_3'$, $ G/\exp \mathfrak{h}'_3$, and $G/\exp \langle E_3\rangle$ are conjugate to the images of $\exp \langle W_1, W_2\rangle\subset G$ under the quotient maps.
\item[(c)] We have $\pi_1(\Int{\tilde{\mathfrak{g}}},e)=\mathbb{Z}$ in the case \textup{(vi)} and $\pi_1(\Int{\tilde{\mathfrak{g}}},e)=\mathbb{Z}^2$ in the cases \textup{(vii)} and \textup{(viii)}.
\end{itemize}
\end{proposition}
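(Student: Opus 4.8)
The plan is to realize each of the three quotient groups through the descended adjoint action and then to transport the decomposition of Theorem~\ref{Letarbitgr} across the quotient map. First I would write $\mathfrak{a}$ for the ideal with $\tilde{\mathfrak{g}}=\mathfrak{g}/\mathfrak{a}$, so that $\mathfrak{a}=\mathfrak{h}_3'\oplus\mathfrak{s}_3'$ in case \textup{(vi)}, $\mathfrak{a}=\mathfrak{h}_3'$ in case \textup{(vii)}, and $\mathfrak{a}=\langle E_3\rangle$ in case \textup{(viii)}. Because $\mathfrak{a}$ is an ideal, $\Ad{g}$ preserves $\mathfrak{a}$ for every $g\in G$ and so descends to an automorphism $\Phi(g)$ of $\tilde{\mathfrak{g}}$; since $G$ is connected, the image of the resulting homomorphism $\Phi\colon G\to GL(\tilde{\mathfrak{g}})$ is the analytic subgroup with Lie algebra $\operatorname{ad}_{\tilde{\mathfrak{g}}}\tilde{\mathfrak{g}}$, namely $\Int{\tilde{\mathfrak{g}}}$. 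Thus $\Int{\tilde{\mathfrak{g}}}\simeq G/\ker\Phi$, and $d\Phi$ identifies $\mathrm{Lie}(\ker\Phi)$ with the preimage in $\mathfrak{g}$ of the center $Z(\tilde{\mathfrak{g}})$. To obtain \textup{(a)} I would reduce to showing $Z(\tilde{\mathfrak{g}})=0$, equivalently $\{x\in\mathfrak{g}:[x,\mathfrak{g}]\subseteq\mathfrak{a}\}=\mathfrak{a}$, and that $\ker\Phi$ is connected; the first I would check directly from the bracket relations, observing that for a general $x$ the brackets $[x,E_1]$, $[x,E_2]$, $[x,W_1]$, $[x,W_2]$, $[x,A_3]$ already force every coordinate of $x$ lying outside $\mathfrak{a}$ to vanish, exactly in the spirit of Lemma~\ref{l}. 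The connectedness I would defer to the end.

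Next, for \textup{(b)} I would transport the Mostow decomposition. By Theorem~\ref{Letarbitgr}\textup{(b)} applied to $\matG\simeq G$, one has $G=G_{iI_3}D_0$ with $G_{iI_3}=\exp\langle W_1,W_2\rangle$ a maximal compact subgroup and $D_0$ a maximal real split solvable subgroup. As $\Phi$ is surjective, $\Int{\tilde{\mathfrak{g}}}=\Phi(G_{iI_3})\,\Phi(D_0)$, where $\Phi(G_{iI_3})$ is compact, being a continuous image of a compact group, and $\Phi(D_0)$ is a connected real split solvable subgroup, being a homomorphic image of one. Theorem~\ref{Letarbitgr}\textup{(a)} would then give that $\Phi(G_{iI_3})$ is a maximal compact subgroup of $\Int{\tilde{\mathfrak{g}}}$, and the conjugacy of maximal compact subgroups would yield \textup{(b)}, since $\Phi(G_{iI_3})$ is exactly the image of $\exp\langle W_1,W_2\rangle$ under the quotient map $G\to G/\exp\mathfrak{a}$.

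For \textup{(c)} I would use that a connected Lie group deformation retracts onto a maximal compact subgroup, so that $\pi_1(\Int{\tilde{\mathfrak{g}}})=\pi_1(\Phi(G_{iI_3}))$. Since $\Phi(\exp(sW_1+tW_2))=\exp(s\operatorname{ad}_{\tilde{\mathfrak{g}}}\overline{W_1}+t\operatorname{ad}_{\tilde{\mathfrak{g}}}\overline{W_2})$, the group $\Phi(G_{iI_3})$ is the compact torus generated by $\operatorname{ad}_{\tilde{\mathfrak{g}}}\overline{W_1}$ and $\operatorname{ad}_{\tilde{\mathfrak{g}}}\overline{W_2}$. In cases \textup{(vii)} and \textup{(viii)} one has $\langle W_1,W_2\rangle\cap\mathfrak{a}=0$ with $\overline{W_1},\overline{W_2}$ acting nontrivially and independently, so $\Phi(G_{iI_3})\simeq G_{iI_3}$ is a two-torus and $\pi_1=\mathbb{Z}^2$; in case \textup{(vi)} one has $W_2\in\mathfrak{s}_3'\subset\mathfrak{a}$ but $W_1\notin\mathfrak{a}$, so $\overline{W_2}=0$ while $\operatorname{ad}_{\tilde{\mathfrak{g}}}\overline{W_1}\neq0$, whence $\Phi(G_{iI_3})$ is a circle and $\pi_1=\mathbb{Z}$. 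Finally I would close \textup{(a)}: in the homotopy exact sequence of the bundle $\ker\Phi\to G\xrightarrow{\Phi}\Int{\tilde{\mathfrak{g}}}$ the map $\Phi$ induces a surjection $\pi_1(G)=\mathbb{Z}^2\to\pi_1(\Int{\tilde{\mathfrak{g}}})$ through $\Phi|_{G_{iI_3}}$, forcing $\pi_0(\ker\Phi)=0$; hence $\ker\Phi=\exp\mathfrak{a}$ and $\Int{\tilde{\mathfrak{g}}}\simeq G/\exp\mathfrak{a}$.

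The $\pi_1$ computations and the transport of the decomposition are routine once set up. I expect the main obstacle to be the bookkeeping behind $Z(\tilde{\mathfrak{g}})=0$, together with the care needed to confirm that $\Phi(D_0)$ is genuinely connected real split solvable so that Theorem~\ref{Letarbitgr}\textup{(a)} applies; once these two points are secured, the three parts close together as above.
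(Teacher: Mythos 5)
Your architecture for parts (b) and (c) matches the paper's: push the decomposition $G=G_{iI_3}D_0$ of Theorem \ref{Letarbitgr}(b) through the quotient, invoke Theorem \ref{Letarbitgr}(a) (the image group is linear, so the theorem applies) to recognize the image of $G_{iI_3}$ as maximal compact, and read off $\pi_1$ from the compact torus. Part (c) is fine as you argue it, since there only the abstract dimension of the torus $\Phi(G_{iI_3})$ matters, and that is controlled by $\langle W_1,W_2\rangle\cap\mathfrak{a}$.

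The gap is in (a), exactly at the point you defer: the connectedness of $\ker\Phi$. Your homotopy-exact-sequence argument is circular. Write $q:G\to G/\exp\mathfrak{a}$ and let $p:G/\exp\mathfrak{a}\to\Int{\tilde{\mathfrak{g}}}$ be the covering with deck group $D=\ker\Phi/\exp\mathfrak{a}$; then $q_*$ is surjective on $\pi_1$ while $\mathrm{coker}(p_*)\cong D$, so surjectivity of $\Phi_*=p_*q_*$ on $\pi_1$ is \emph{equivalent} to $D=\{e\}$, which is what you are trying to prove. The assumption is smuggled in at ``$\Phi(G_{iI_3})\simeq G_{iI_3}$'': linear independence of $\mathrm{ad}_{\tilde{\mathfrak{g}}}\overline{W_1}$ and $\mathrm{ad}_{\tilde{\mathfrak{g}}}\overline{W_2}$ only makes $\ker(\Phi|_{G_{iI_3}})$ finite, hence $\Phi(G_{iI_3})$ an abstract two-torus; it does not make $\Phi|_{G_{iI_3}}$ injective, and a surjective torus homomorphism with finite kernel (e.g.\ $z\mapsto z^2$) is not surjective on $\pi_1$. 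What is really needed is the group-level statement that no element of $G$ outside $\exp\mathfrak{a}$ acts trivially on $\mathfrak{g}/\mathfrak{a}$ (equivalently, that $G/\exp\mathfrak{a}$ has trivial center), and this is precisely what the paper's proof supplies by computing $\Ad{g^{-1}}E_1$, $\Ad{g^{-1}}W_1$, etc.\ for a general $g\in\matG$ and solving the resulting equations; the elements you must rule out are things like $\exp(\pi W_1)$, whose adjoint action negates $E_{3,1}$ and $A_{3,1}$ and fixes everything else, so its nontriviality mod $\mathfrak{a}$ hangs on just those two coordinates. A smaller slip in the same part: your list $[x,E_1],[x,E_2],[x,W_1],[x,W_2],[x,A_3]$ never constrains the $A_3$-coordinate of $x$ (none of those five elements has a nonzero bracket with $A_3$), so you also need, say, $[x,E_{3,1}]$ to conclude $Z(\tilde{\mathfrak{g}})=0$.
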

\begin{proof}
We shall prove (a) and (b) for the case (viii) and (c) for the case (vi).
For the other cases, this can be proved in the same way.

(a) It is enough to show that $G/\exp\langle E_3\rangle$ has trivial center.
Let 
\begin{equation*}
g=\left[\begin{array}{cccccc}a_1&	0&	0&	b_1&	0&	\mu'_1\\
		0&	a_2&	0&	0&	b_2&	\mu'_2\\
		(c_1\mu'_1+a_1\lambda'_1)a_3&	(c_2\mu'_2+a_2\lambda'_2)a_3&	a_3&	(d_1\mu'_1+b_1\lambda'_1)a_3&	(d_2\mu'_2+b_2\lambda'_2)a_3&	\kappa\\
		c_1&	0&	0&	d_1&	0&	-\lambda'_1\\
		0&	c_2&	0&	0&	d_2&	-\lambda'_2\\
		0&	0&	0&	0&	0&	1/a_3
\end{array}\right]\in\matG.
\end{equation*}
Then
\begin{equation*}
g^{-1}=\left[\begin{array}{cccccc}d_{1}&	0&	0&	-b_{1}&	0&	-(d_{1}\mu'_{1}+b_{1}\lambda'_{1})a_3\\
		0&	d_{2}&	0&	0&	-b_{2}&	-(d_{2}\mu'_{2}+b_{2}\lambda'_{2})a_3\\
		-\lambda'_{1}&	-\lambda'_{2}&	1/a_3&	-\mu'_{1}&	-\mu'_{2}&	-\kappa\\
		-c_{1}&	0&	0&	a_{1}&	0&	(c_{1}\mu'_{1}+a_{1}\lambda'_{1})a_3\\
		0&	-c_{2}&	0&	0&	a_{2}&	(c_{2}\mu'_{2}+a_{2}\lambda'_{2})a_3\\
		0&	0&	0&	0&	0&	a_{3}
\end{array}\right].
\end{equation*}
We have
\begin{equation*}\begin{split}
 \Ad{g^{-1}}E_1&=(d_{1}^2-c_{1}^2)E_1+{\lambda'_{1}}^2E_{3}-d_{1}\lambda'_{1}E_{3,1}+2c_{1}d_{1}A_1-c_{1}\lambda'_{1}A_{3,1}-c_{1}^2W_{1},\\
 \Ad{g^{-1}}E_2&=(d_{2}^2-c_{2}^2)E_2+{\lambda'_{2}}^2E_{3}-d_{2}\lambda'_{2}E_{3,2}+2c_{2}d_{2}A_2-c_{2}\lambda'_{2}A_{3,2}-c_{2}^2W_2,\\
 \Ad{g^{-1}}A_{3,1}&=(2\mu'_{1}E_{3}+b_{1}E_{3,1}+a_{1}A_{3,1})/a_3,\\
 \Ad{g^{-1}}A_{3,2}&=(2\mu'_{2}E_{3}+b_{2}E_{3,2}+a_{2}A_{3,2})/a_3,\\
 \Ad{g^{-1}}W_{1}&=(-d_{1}^2+c_{1}^2-b_{1}^2+a_{1}^2)E_1+(-{\mu'_{1}}^2-{\lambda'_{1}}^2)E_{3}+(d_{1}\lambda'_{1}-b_{1}\mu'_{1})E_{3,1}\\\quad&+(-2c_{1}d_{1}-2a_{1}b_{1})A_1+(c_{1}\lambda'_{1}-a_{1}\mu'_{1})A_{3,1}+(c_{1}^2+a_{1}^2)W_1,\\
 \Ad{g^{-1}}W_2&=(-d_{2}^2+c_{2}^2-b_{2}^2+a_{2}^2)E_2+(-{\mu'_{2}}^2-{\lambda'_{2}}^2)E_{3}+(d_{2}\lambda'_{2}-b_{2}\mu'_{2})E_{3,2}\\\quad&+(-2c_{2}d_{2}-2a_{2}b_{2})A_{2}+(c_{2}\lambda'_{2}-a_{2}\mu'_{2})A_{3,2}+(c_{2}^2+a_{2}^2)W_{2}.
\end{split}\end{equation*}
Suppose that $\Ad{g^{-1}}$ induces the identity map of $\mathfrak{g}/\langle E_3\rangle$ onto itself.
Then we have
\begin{equation*}\begin{split}
d_1^2-c_1^2=1,\quad d_1\lambda'_1=c_1^2=0,\\
d_2^2-c_2^2=1,\quad d_2\lambda'_2=c_2^2=0,\\
b_1=0,\quad a_1/a_3=1,\\
b_2=0,\quad a_2/a_3=1,\\
c_1\lambda'_1-a_1\mu'_1=0,\quad c_1^2+a_1^2=1,\\
c_2\lambda'_2-a_2\mu'_2=0,\quad c_2^2+a_2^2=1.
\end{split}\end{equation*}
Hence it follows that $g^{-1}\in \exp\langle E_3\rangle\subset G$, which implies that $G/\exp\langle E_3\rangle $ has trivial center.

(b) By (a), we see that $G/\exp\langle E_3\rangle $ is linearlizable.
By Theorem \ref{Letarbitgr}, we conclude that the image of the subgroup $\exp\langle W_1,W_2\rangle$ of $G$ under the quotient map is a maximal compact subgroup of $G/\exp\langle E_3\rangle$.
Note that the image of a compact subgroup or a real split solvable Lie subgroup under a homomorphism of a Lie groups is also a compact subgroup or real split solvable Lie subgroup, respectively.

(c) The group $\exp \mathfrak{h}'_3\oplus\mathfrak{s}'_3\subset G$ is a topological product of $H_3(\mathbb{R})$ and $SL(2,\mathbb{R})$.
By (a), it follows that $\pi_1(\Int{\tilde{\mathfrak{g}}},e)=\mathbb{Z}^2/\mathbb{Z}=\mathbb{Z}$.
\end{proof}
In the case (vi), we have $\pi_1(\mathop{\mathrm{Int}}\tilde{\mathfrak{g}},e)=\mathbb{Z}$.
We may assume that $\mathfrak{k}\subset \langle E_2,E_3,E_{3,2},A_2,A_{3,2},W_1,\allowbreak
W_2\rangle/\langle E_2, E_3, E_{3,2},A_2, A_{3,2},W_2\rangle$, and
we then have $\dim \mathfrak{k}=0$.
Since $M$ is diffeomorphic to a coadjoint orbit, the group $\mathop{\mathrm{Int}}{\tilde{\mathfrak{g}}}$ acts transitively on $M$, and the isotropy subgroup $(\mathop{\mathrm{Int}}\tilde{\mathfrak{g}})_{m_0}$ at $m_0$ equals $\{e\}$.
Thus $\pi_1((\mathop{\mathrm{Int}}\tilde{\mathfrak{g}})_{m_0},e)=\{e\}$.
This contradicts that $M$ is simply connected.
Similarly, we have $\pi_1(\mathop{\mathrm{Int}}\tilde{\mathfrak{g}},e)=\mathbb{Z}^2$, $\pi_1((\Int{\tilde{\mathfrak{g}}})_{m_0},e)=\mathbb{Z}$ in the case (vii),
and we have $\pi_1(\mathop{\mathrm{Int}}\tilde{\mathfrak{g}},e)=\mathbb{Z}^2$, $\pi_1((\Int{\tilde{\mathfrak{g}}})_{m_0},e)=\mathbb{Z}$ in the case (viii).
These results contradict that $M$ is simply connected.
We obtain the following theorem.

\begin{theorem}\label{Everyirred2}
Every irreducible non-generic CS representation of $G$ is given by the composition of the external tensor product of a one-dimensional unitary representation of $\mathbb{R}_{>0}$ and a nontrivial irreducible highest weight representation of $SL(2,\mathbb{R})\times SL(2,\mathbb{R})$ with the map $G\simeq G'\rightarrow\mathbb{R}_{>0}\times SL(2,\mathbb{R})\times SL(2,\mathbb{R})$ given by
\begin{equation*}\begin{split}
G'\ni &\left[\begin{array}{cccccc}a_1&0&0&b_1&0&\mu_1'\\
0&a_2&0&0&b_2&\mu_2'\\
\lambda_1&\lambda_2&a_3&\mu_1&\mu_2&\kappa\\
c_1&0&0&d_1&0&-\lambda_1'\\
0&c_2&0&0&d_2&-\lambda_2'\\
0&0&0&0&0&a_3^{-1}
\end{array}\right]\\&\mapsto\left(a_3, \left[\begin{array}{cc}a_1&b_1\\c_1&d_1\end{array}\right],\left[\begin{array}{cc}a_2&b_2\\c_2&d_2\end{array}\right]\right)\in \mathbb{R}_{>0}\times SL(2,\mathbb{R})\times SL(2,\mathbb{R}).
\end{split}\end{equation*}
\end{theorem}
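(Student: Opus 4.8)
The plan is to show that every non-generic irreducible CS representation of $G$ factors through the quotient $Q:=G/\exp\mathfrak{h}_5$, to identify $Q$ with $\mathbb{R}_{>0}\times SL(2,\mathbb{R})\times SL(2,\mathbb{R})$ by the displayed homomorphism, and then to classify the irreducible CS representations of this product. First I would take an irreducible CS representation $\pi$ of $G$ that is not generic; then $\ker\pi$ is non-discrete, so $\mathfrak{h}:=\mathrm{Lie}(\ker\pi)$ is a nontrivial ideal of $\mathfrak{g}$, and passing to $G/\overline{\exp\mathfrak{h}}$ turns $\pi$ into a generic CS representation of a quotient $\tilde G$ whose Lie algebra is one of the cases (i)--(viii). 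The cases (vi)--(viii) have just been excluded, and I would rule out case (i) ($\tilde{\mathfrak{g}}\simeq\mathbb{R}$) because an abelian group has only one-dimensional irreducible unitary representations, whose projective space is a single point and which therefore admit no nontrivial CS orbit. Reading off Figure \ref{fig1}, the ideals realizing the remaining cases (ii)--(v) all contain $\mathfrak{h}_5$, so $\exp\mathfrak{h}_5\subseteq\ker\pi$ and $\pi$ descends to an irreducible representation of $Q$.

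Next I would identify $Q$ explicitly: a short computation from the matrix form in Theorem \ref{Thefollowi} shows that the displayed map is a surjective homomorphism whose kernel is exactly $\exp\mathfrak{h}_5$, and since the two $2\times2$ blocks are genuine elements of $SL(2,\mathbb{R})$ no covering group intervenes, so $Q\simeq\mathbb{R}_{>0}\times SL(2,\mathbb{R})\times SL(2,\mathbb{R})$. As $Q$ is a direct product of type I groups, the descended representation is an external tensor product $\chi\boxtimes\rho_1\boxtimes\rho_2$, with $\chi$ a character of $\mathbb{R}_{>0}$ and $\rho_1,\rho_2$ irreducible unitary representations of $SL(2,\mathbb{R})$.

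The heart of the argument is then to decide when such a product is a CS representation. Because $\chi$ is one-dimensional, the space is $\mathcal{H}_1\otimes\mathcal{H}_2$, on which $\mathbb{R}_{>0}$ acts by the scalar $\chi$ and hence trivially on the projective space; I would use this to show that every $Q$-orbit in $\mathbb{P}(\mathcal{H}_1\otimes\mathcal{H}_2)$ coincides with an $SL(2,\mathbb{R})\times SL(2,\mathbb{R})$-orbit with the same induced complex structure, so that $\chi\boxtimes\rho_1\boxtimes\rho_2$ is a CS representation of $Q$ if and only if $\rho_1\boxtimes\rho_2$ is one for the semisimple group $SL(2,\mathbb{R})\times SL(2,\mathbb{R})$, with $\chi$ left free. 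I would then invoke Lisiecki's classification of CS representations of semisimple groups---applied to the product when both $\rho_i$ are nontrivial (a generic case, as a nontrivial highest weight representation of $SL(2,\mathbb{R})$ has central, hence discrete, kernel) and to a single factor when one $\rho_i$ is trivial---to conclude that $\rho_1\boxtimes\rho_2$ is a CS representation exactly when it is a nontrivial irreducible highest weight representation. Pullback along the surjection $G\to Q$ preserves irreducibility, the space, and the orbit, and forces $\exp\mathfrak{h}_5$ into the kernel, so these pullbacks are precisely the non-generic irreducible CS representations of $G$.

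The step I expect to be the main obstacle is this last classification of CS representations of the product $Q$: one has to argue cleanly that CS-ness is detected solely on the semisimple factor, so that the $\mathbb{R}_{>0}$-character is unconstrained while the $SL(2,\mathbb{R})\times SL(2,\mathbb{R})$-part must be highest weight, and one has to fix the complex-structure convention on $\mathbb{P}(\mathcal{H}_1\otimes\mathcal{H}_2)$ so that ``highest weight'' selects exactly the CS representations (the holomorphic and antiholomorphic series of each factor), consistently with the holomorphic multiplier realization used for the generic representations. By contrast, the ideal bookkeeping from Figure \ref{fig1}, the identification of the kernel of the quotient map, and the tensor-product decomposition are all routine.
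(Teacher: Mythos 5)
Your proposal is correct and follows essentially the same route as the paper: reduce to generic CS representations of the quotients $\tilde G$ listed in cases (i)--(viii) via the ideal lattice of Figure \ref{fig1}, discard (vi)--(viii) by the fundamental-group argument already given, discard the abelian case (i), and handle the remaining reductive quotients (all of which factor through $G/\exp\mathfrak{h}_5\simeq\mathbb{R}_{>0}\times SL(2,\mathbb{R})\times SL(2,\mathbb{R})$) by Lisiecki's semisimple classification. The paper states the theorem immediately after excluding (vi)--(viii) and leaves these last steps implicit, so your write-up simply supplies the details the author omits.
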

\section{Intertwining operators}\label{Intertwini}
For $n,n'\in\mathbb{Z}$, let $(\pi_{n,n'}, \mathcal{H}^{n,n'})$ be any irreducible unitary representation of $SL(2,\mathbb{R})\times SL(2,\mathbb{R})$ such that 
there exists $v\in(\mathcal{H}^{n,n'})^\infty\backslash\{0\}$ satisfying
\begin{equation*}
d\pi_{n,n'}(x,y)\,v=0\quad \mbox{for}\,(x,y) \in\mathbb{C}\left[\begin{array}{cc}-i&1\\1&i\end{array}\right]\times\mathbb{C}\left[\begin{array}{cc}-i&1\\1&i\end{array}\right]
\end{equation*}
and
\begin{equation*}
\pi_{n,n'}\left(\left[\begin{array}{cc}\cos \theta&-\sin\theta\\\sin\theta&\cos\theta\end{array}\right],\left[\begin{array}{cc}\cos \tau&-\sin\tau\\\sin\tau&\cos\tau\end{array}\right]\right)v=e^{i(n\theta+n'\tau)}v\quad(\theta,\tau\in\mathbb{R}).
\end{equation*}
Then the set of equivalence classes of irreducible highest weight representations of $SL(2,\mathbb{R})\times SL(2,\mathbb{R})$ is given by $\{[\pi_{n,n'}]; n,n'\in\mathbb{Z}\}$.
Let $\tilde{G}=\mathbb{R}_{>0}\times SL(2,\mathbb{R})\times SL(2,\mathbb{R})$.
For $\eta_3\in\mathbb{R}$, let $(\pi_{\eta_3,n,n'},\mathcal{H}^{n,n'})$ be the external tensor product of the one dimensional representation of $\mathbb{R}_{>0}$ given by $\mathbb{R}_{>0}\ni \gamma\mapsto \gamma^{2i\eta_3}\in\mathbb{C}^\times$ and the representation $\pi_{n,n'}$ of $SL(2,\mathbb{R})\times SL(2,\mathbb{R})$.
Composing with the map $G\rightarrow \tilde{G}$ given in Theorem \ref{Everyirred2}, we regard $\pi_{\eta_3,n,n'}$ as a representation of $G$.
Let $n,n'\in\mathbb{Z}_{\geq 0}$.
By \eqref{dpixiover}, we can take $\pi_{\xi(0,\eta_3,n,n')}|_{SL(2,\mathbb{R})\times SL(2,\mathbb{R})}$ to be the irreducible unitary representation $\pi_{n,n'}$, and hence $\pi_{\xi(0,\eta_3,n,n')}$ is unitarily equivalent with $\pi_{\eta_3,n,n'}$ as representations of $G$.
Therefore we get the following theorem.
\begin{theorem}\label{Thesetofun}
The set of unitary equivalence classes of irreducible non-generic CS representations of $G$ is given by \begin{equation*}
\{[\pi_{\eta_3,n,n'}];(\eta_3,n,n')\in\mathbb{R}\times\mathbb{Z}\times\mathbb{Z}\backslash\mathbb{R}\times\{0\}\times\{0\}\}.
\end{equation*}
For $(\eta_3,n,n')\in\mathbb{R}\times\mathbb{Z}_{\geq0}\times\mathbb{Z}_{\geq 0}$, we have $\pi_{\eta_3,n,n'}\simeq \pi_{\xi(0,\eta_3,n,n')}$ and $\pi_{-\eta_3,-n,-n'}\simeq \overline{\pi_{\xi(0,\eta_3,n,n')}}$.
\end{theorem}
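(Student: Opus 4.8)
The plan is to read the classification off Theorem \ref{Everyirred2} and then match the explicit realizations with the $\pi_{\xi(0,\eta_3,n,n')}$. By Theorem \ref{Everyirred2}, every irreducible non-generic CS representation of $G$ factors through the quotient map $G\rightarrow\tilde{G}=\mathbb{R}_{>0}\times SL(2,\mathbb{R})\times SL(2,\mathbb{R})$, whose kernel is the five-dimensional subgroup $\exp\mathfrak{h}_5$, and equals the external tensor product of a one-dimensional unitary representation of $\mathbb{R}_{>0}$ with a nontrivial irreducible highest weight representation of $SL(2,\mathbb{R})\times SL(2,\mathbb{R})$. Since the continuous unitary characters of $\mathbb{R}_{>0}$ are exactly the $\gamma\mapsto\gamma^{2i\eta_3}\ (\eta_3\in\mathbb{R})$, and the irreducible highest weight representations of $SL(2,\mathbb{R})\times SL(2,\mathbb{R})$ are exactly the $\pi_{n,n'}\ (n,n'\in\mathbb{Z})$ with the trivial one at $(n,n')=(0,0)$, this already yields the inclusion of the set of equivalence classes of irreducible non-generic CS representations into $\{[\pi_{\eta_3,n,n'}];(\eta_3,n,n')\in\mathbb{R}\times\mathbb{Z}\times\mathbb{Z}\backslash\mathbb{R}\times\{0\}\times\{0\}\}$.

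For the reverse inclusion I would check that every $\pi_{\eta_3,n,n'}$ with $(n,n')\neq(0,0)$ is an irreducible non-generic CS representation. It is irreducible, being the pullback along the surjection $G\rightarrow\tilde{G}$ of an external tensor product of irreducibles, and it is non-generic because it is by construction trivial on $\exp\mathfrak{h}_5$, so its kernel contains the positive-dimensional group $\exp\mathfrak{h}_5$ and fails to be discrete. For the CS property I would write $\pi_{n,n'}=\pi_n\boxtimes\pi_{n'}$ as an external tensor product of highest weight representations of the two $SL(2,\mathbb{R})$ factors, with representation spaces $\mathcal{H}_1,\mathcal{H}_2$ and highest weight lines $[v_1],[v_2]$. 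Through the Segre embedding $\mathbb{P}(\mathcal{H}_1)\times\mathbb{P}(\mathcal{H}_2)\hookrightarrow\mathbb{P}(\mathcal{H}_1\otimes\mathcal{H}_2)$, the orbit through $[v_1\otimes v_2]$ is the product of the two individual CS orbits, hence a complex submanifold, and it does not reduce to a point as soon as one of $n,n'$ is nonzero. Tensoring with the one-dimensional $\mathbb{R}_{>0}$-factor and pulling back along $G\rightarrow\tilde{G}$ leave this orbit unchanged, so $\pi_{\eta_3,n,n'}$ is a CS representation of $G$.

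Injectivity of the parametrization follows from the uniqueness of the tensor factors: $\pi_{\eta_3,n,n'}\simeq\pi_{\eta_3',m,m'}$ forces $\eta_3=\eta_3'$ and $\pi_{n,n'}\simeq\pi_{m,m'}$, hence $(n,n')=(m,m')$; combined with the two inclusions this identifies the set exactly. The first identification $\pi_{\xi(0,\eta_3,n,n')}\simeq\pi_{\eta_3,n,n'}$ for $n,n'\in\mathbb{Z}_{\geq0}$ is the one already obtained from \eqref{dpixiover}, where the cyclic vector $\mathcal{K}_{iI_3}^{\xi(0,\eta_3,n,n')}$ is a highest weight vector of rotation weights $(n,n')$ under $\langle W_1,W_2\rangle$ on which $A_3$ acts by $\eta_3$. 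The second identification then follows by complex conjugation, since $\overline{\pi_{\eta_3,n,n'}}$ is the external tensor product of $\gamma\mapsto\gamma^{-2i\eta_3}$ with $\overline{\pi_{n,n'}}=\pi_{-n,-n'}$, giving $\overline{\pi_{\xi(0,\eta_3,n,n')}}\simeq\pi_{-\eta_3,-n,-n'}$.

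The step I expect to be the main obstacle is showing that the strictly mixed-sign pairs, those with $nn'<0$, genuinely occur. The realization $\pi_{\xi(0,\eta_3,n,n')}$ is available only for $n,n'\geq0$, and its complex conjugate only for $n,n'\leq0$, so such pairs lie outside the range of Theorem \ref{beginitemi} and cannot be produced from the holomorphic multiplier representations over $\mathcal{D}_5$. For them one is forced to argue directly, through the Segre embedding above, that the external tensor product of a positive and a negative discrete series of $SL(2,\mathbb{R})$ still carries a CS orbit, namely a product of two discs, which is a complex submanifold of $\mathbb{P}(\mathcal{H}^{n,n'})$ for the fixed complex structure.
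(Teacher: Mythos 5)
Your proposal is correct and follows essentially the same route as the paper: Theorem \ref{Everyirred2} supplies the forward inclusion, \eqref{dpixiover} supplies the identification $\pi_{\eta_3,n,n'}\simeq\pi_{\xi(0,\eta_3,n,n')}$ for $n,n'\geq 0$, and conjugation handles $n,n'\leq 0$. The one point where you are more explicit than the paper is the mixed-sign case: the paper implicitly relies on every irreducible highest weight representation of $SL(2,\mathbb{R})\times SL(2,\mathbb{R})$ being a CS representation (via Lisiecki's semisimple classification quoted in the introduction), whereas your Segre-embedding argument verifies this directly; both are valid.
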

We fix a triple $(\eta_3,n,n')$ with $\eta\in\mathbb{R}$ and $n,n'\in\mathbb{Z}_{\geq 0}$.
We shall give an explicit description of an intertwining operator between the unitary representations $\pi_{\eta_3,n,n'}$ and $ \pi_{\xi(0,\eta_3,n,n')}$ of $G$.
Using the realization of $G$ as a linear group in Section \ref{theholomor}, we shall define a holomorphic multiplier representation of $G$.
Let $m:G\times \mathcal{D}_5\rightarrow\mathbb{C}^\times$ be the holomorphic multiplier given by 
\begin{equation*}\begin{split}
m(g,z)=(c_1z^1+d_1)^n(c_2z^2+d_2)^{n'}{a_3}^{2i\eta_3}\quad
(g\in G,z\in\mathcal{D}_5),
\end{split}\end{equation*}
and let $\tau_m$ be the holomorphic multiplier representation given by
\begin{equation*}\begin{split}
\tau_m(g)f(z)=m(g^{-1},z)^{-1}f(g^{-1}z)\quad
(g\in G,z\in \mathcal{D}_5,f\in\mathcal{O}(\mathcal{D}_5)).
\end{split}\end{equation*}
Then $\pi_{\xi(0,\eta_3,n,n')}$ can be considered as a unitarization of $\tau_m$.

Next we see a natural holomorphic multiplier representation of $G$ in which $\pi_{\eta_3,n,n'}$ is realized.
Let $\mathcal{D}_1$ be the unit disc in $
\mathbb{C}$, and
let $\tilde{m}:\tilde{G}\times \mathcal{D}_1\times\mathcal{D}_1\rightarrow\mathbb{C}^\times$ be the holomorphic multiplier given by 
\begin{equation*}\begin{split}
\tilde{m}((\gamma, g_1,g_2),(w^1,w^2))=(c_1w^1+d_1)^n(c_2w^2+d_2)^{n'}\gamma^{2i\eta_3}\\
((\gamma, g_1,g_2)\in \tilde{G},(w^1,w^2)\in\mathcal{D}_1\times\mathcal{D}_1),
\end{split}\end{equation*}
where $g_i=\left[\begin{array}{cc}a_i&b_i\\c_i&d_i\end{array}\right]\in SL(2,\mathbb{R})$ for $i=1,2$.
We denote by $\mathcal{D}_1\ni w^i\mapsto g_iw^i\in\mathcal{D}_1$ the action of $SL(2,\mathbb{R})$ by linear fractional transformations for $i=1,2$.
Then we can define the following holomorphic multiplier representation $\tau_{\tilde{m}}$ of $\tilde{G}$ on the space $\mathcal{O}(\mathcal{D}_1\times\mathcal{D}_1)$ of holomorphic functions on $\mathcal{D}_1\times\mathcal{D}_1$:
\begin{equation*}\begin{split}
\tau_{\tilde{m}}(g)f(w^1,w^2)=m(g^{-1},(w^1,w^2))^{-1}f(g_1^{-1}w^1,g_2^{-1}w^2)\\
(g=(\gamma, g_1,g_2)\in \tilde{G},(w^1,w^2)\in\mathcal{D}_1\times\mathcal{D}_1,f\in\mathcal{O}(\mathcal{D}_1\times\mathcal{D}_1)).
\end{split}\end{equation*}
We regard $\tau_{\tilde{m}}$ as a representation of $G$ which $\exp\mathfrak{h}_5\subset G$ acts by the trivial representation.
Then we have the following theorem.
\begin{theorem}\label{Themapmath}
The map $F:\mathcal{O}(\mathcal{D}_1\times\mathcal{D}_1)\ni f\mapsto F_f\in\mathcal{O}(\mathcal{D}_5)$ defined by $F_f(z)=f(z^1,z^2)\,(z\in\mathcal{D}_5)$ intertwines $\tau_{\tilde{m}}$ with $\tau_m$, and hence $F$ gives rise to an intertwining operator between the unitarizations.
\end{theorem}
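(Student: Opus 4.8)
The plan is to reduce the statement to two elementary facts: that the action of $G$ on $\mathcal{D}_5$ moves the first two diagonal coordinates $(z^1,z^2)$ by exactly the $SL(2,\mathbb{R})\times SL(2,\mathbb{R})$-action used to build $\tau_{\tilde m}$, and that the multipliers $m$ and $\tilde m$ agree on these coordinates. First I would make the $G$-action explicit. Writing an element of $\matG$ in $3$-by-$3$ blocks $\left[\begin{smallmatrix}A&B\\C&D\end{smallmatrix}\right]$ and recalling that it acts on $z\in\mathcal{D}_5$ by $gz=(Az+B)(Cz+D)^{-1}$, one checks that $Cz+D$ is upper triangular with diagonal $(c_1z^1+d_1,\,c_2z^2+d_2,\,a_3^{-1})$; inverting it and reading off the first two diagonal entries of the product gives
\begin{equation*}
(gz)^1=\frac{a_1z^1+b_1}{c_1z^1+d_1}=g_1z^1,\qquad (gz)^2=\frac{a_2z^2+b_2}{c_2z^2+d_2}=g_2z^2,
\end{equation*}
where $g_1,g_2\in SL(2,\mathbb{R})$ are the two blocks of $g$. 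In particular $(gz)^1$ and $(gz)^2$ depend only on $z^1$ and $z^2$, so the action on the pair $(z^1,z^2)$ factors through the homomorphism $G\to\tilde G$ of Theorem \ref{Everyirred2} followed by the fractional action of $SL(2,\mathbb{R})\times SL(2,\mathbb{R})$. Comparing the definitions of $m$ and $\tilde m$ (with $\gamma=a_3$) then yields $m(g,z)=\tilde m(g,(z^1,z^2))$.

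Granting these, the intertwining relation is a direct substitution. For $f\in\mathcal{O}(\mathcal{D}_1\times\mathcal{D}_1)$, $g\in G$ and $z\in\mathcal{D}_5$,
\begin{equation*}
\tau_m(g)F_f(z)=m(g^{-1},z)^{-1}F_f(g^{-1}z)=m(g^{-1},z)^{-1}f(g_1^{-1}z^1,g_2^{-1}z^2),
\end{equation*}
while, using $F_{\tau_{\tilde m}(g)f}(z)=(\tau_{\tilde m}(g)f)(z^1,z^2)$,
\begin{equation*}
F_{\tau_{\tilde m}(g)f}(z)=\tilde m(g^{-1},(z^1,z^2))^{-1}f(g_1^{-1}z^1,g_2^{-1}z^2).
\end{equation*}
Since $m(g^{-1},z)=\tilde m(g^{-1},(z^1,z^2))$ by the previous paragraph, the two right-hand sides coincide, so $F\circ\tau_{\tilde m}(g)=\tau_m(g)\circ F$; this is consistent with viewing $\tau_{\tilde m}$ as a representation of $G$ on which $\exp\mathfrak{h}_5$ acts trivially, since $\exp\mathfrak{h}_5$ fixes both $(z^1,z^2)$ and $\tilde m$. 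I would also record that $F$ is injective: $F_f$ is determined by the values $f(z^1,z^2)$, and any such pair extends to a point of $\mathcal{D}_5$ by taking $z^4=z^5=0$ and $z^3$ with sufficiently large imaginary part, so $F_f$ recovers $f$.

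For the final clause I would transport the Hilbert structure through $F$. Being an injective $G$-equivariant map, $F$ carries the unitarization $\mathcal{H}^{n,n'}\subset\mathcal{O}(\mathcal{D}_1\times\mathcal{D}_1)$ of $\tau_{\tilde m}$ onto a $\tau_m$-invariant subspace of $\mathcal{O}(\mathcal{D}_5)$; equipped with the push-forward inner product this subspace is a Hilbert space on which $\tau_m$ acts unitarily, and since evaluation at $z\in\mathcal{D}_5$ pulls back under $F$ to evaluation at $(z^1,z^2)$, which is continuous on the reproducing kernel Hilbert space $\mathcal{H}^{n,n'}$, it satisfies condition (i) of Definition \ref{wesaythatr}. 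It is therefore a unitarization of $\tau_m$, so by the uniqueness of unitarizations recalled after Definition \ref{wesaythatr} it coincides with $\mathcal{H}^{\xi(0,\eta_3,n,n')}$, the two inner products agreeing up to a positive scalar. Hence $F$ restricts, after normalization, to a unitary intertwiner between the two unitarizations, which are unitarily equivalent by Theorem \ref{Thesetofun}. The block computation of the first paragraph and the substitution of the second are routine; the step that needs genuine care is this last one, where one must check that $F$ keeps the unitarization of $\tau_{\tilde m}$ inside the target unitarization rather than merely mapping the ambient spaces of holomorphic functions, and it is precisely the uniqueness of the unitarization that closes this gap.
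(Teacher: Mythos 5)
Your proof is correct; the paper states Theorem \ref{Themapmath} without proof, and your argument supplies exactly the verification it leaves implicit --- the block computation showing that $Cz+D$ is upper triangular with diagonal $(c_1z^1+d_1,\,c_2z^2+d_2,\,a_3^{-1})$, hence $(gz)^i=g_iz^i$ and $m(g,z)=\tilde m(g,(z^1,z^2))$, followed by the transport of the Hilbert structure and the uniqueness of unitarizations. The only point worth flagging is not a gap in your argument but an inconsistency in the paper's own setup: for $F_f(z)=f(z^1,z^2)$ to be defined one must read $\mathcal{D}_1$ as the upper half-plane (on which $SL(2,\mathbb{R})$ acts by the fractional transformations appearing in $\tilde m$) rather than the unit disc, which is implicitly what your injectivity and compactness arguments do.
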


\end{document}